\newcommand{\nwc}{\newcommand}
\nwc{\aaa}{\mathcal{F}}
\nwc{\aap}{\mathcal{F}_{P}}
\nwc{\cb}{\overline{C}}
\nwc{\ccc}{\mathfrak{c}}
\nwc{\ch}{\widehat{C}}
\nwc{\cin}{\textbf{(v)}}
\nwc{\cpll}{\mathfrak{c}_{P'}}
\nwc{\ct}{\tilde{C}}
\nwc{\dd}{\text{L}}
\nwc{\ddd}{\mathfrak{d}}
\nwc{\ddl}{\mathcal{L}'}
\nwc{\dlp}{\delta_{P}}
\nwc{\doi}{\textbf{(ii)}}
\nwc{\enq}{$ $}
\nwc{\edim}{{\rm edim}}
\newcommand{\F}{\mathbb{F}} 
\newcommand{\Q}{\mathbb{Q}}
\nwc{\gtl}{\tilde{g}}
\nwc{\hua}{h^{1}(C,\aaa )}
\nwc{\llb}{\mathcal{L}}
\nwc{\lpl}{\ell_{{\rm P}'}}
\nwc{\mm}{\mathfrak{m}}
\nwc{\mmp}{\mathfrak{m}_{P}}
\nwc{\mmq}{\mathfrak{m}_{Q}}
\nwc{\mpd}{\mathfrak{m}_{P}^{2}}
\nwc{\nn}{\mathbb{N}}
\nwc{\ob}{\overline{\mathcal{O}}}
\nwc{\obp}{\overline{\mathcal{O}}_P}
\nwc{\och}{\mathcal{O}_{\hat{C}}}
\nwc{\oh}{\hat{\mathcal{O}}}
\nwc{\ohp}{\hat{\mathcal{O}}_{P}}
\nwc{\ol}{\mathcal{O}'}
\nwc{\oma}{\Omega (\mathfrak{a})}
\nwc{\omo}{\Omega (\mathcal{O})}
\nwc{\oo}{\mathcal{O}}
\nwc{\op}{\mathcal{O}_P}
\nwc{\opc}{\mathcal{O}_{P,C}}
\nwc{\oph}{\hat{\mathcal{O}}_{P}}
\nwc{\opl}{\mathcal{O}_{P}'}
\nwc{\oplc}{\mathcal{O}_{P,C}'}
\nwc{\opll}{\mathcal{O}_{P'}}
\nwc{\opt}{\tilde{\mathcal{O}}_{P}}
\nwc{\optt}{{\mathcal{O}}_{\tilde{P}}}
\nwc{\oq}{\mathcal{O}_{Q}}
\nwc{\oqt}{\tilde{\mathcal{O}}_{Q}}
\nwc{\ot}{\tilde{\mathcal{O}}}
\nwc{\overop}{\bar{\oo}_{P}}
\nwc{\pb}{\overline{P}}
\nwc{\pgmd}{\mathbb{P}^{g+2}}
\nwc{\pgmu}{\mathbb{P}^{g+1}}
\nwc{\ph}{\hat{P}}
\nwc{\pp}{\mathbb{P}}
\nwc{\prv}{\noindent\textbf{Proof}:}
\nwc{\pt}{\tilde{P}}
\nwc{\ptl}{\tilde{P}}
\nwc{\pum}{\mathbb{P}^{1}}
\nwc{\qh}{\hat{Q}}
\nwc{\qtl}{\tilde{Q}}
\nwc{\qua}{\textbf{(iv)}}
\nwc{\rh}{\hat{R}}
\nwc{\sei}{\textbf{(vi)}}
\nwc{\sep}{\beq\ast\ \ast\ \ast\enq}
\nwc{\spc}{{\rm Spec}}
\nwc{\ssp}{S_{P}}
\nwc{\tre}{\textbf{(iii)}}
\nwc{\um}{\textbf{(i)}}
\nwc{\vpb}{v_{\overline{P}}}
\nwc{\wh}{\hat{\omega}}
\nwc{\whp}{\hat{\omega}_{P}}
\nwc{\woch}{\omega\cdot\mathcal{O}_{\hat{C}}}
\nwc{\woh}{\omega\cdot\hat{\mathcal{O}}}
\nwc{\ww}{\omega}
\nwc{\wwh}{\widehat{\omega}}
\nwc{\wwhp}{\widehat{\omega}_P}
\nwc{\wwp}{\omega _{P}}
\nwc{\zz}{\mathbb{Z}}
\newcommand{\C}{\mathbb{C}}
\newcommand{\Z}{\mathbb{Z}}
\newtheorem{thm}{Theorem}
\newtheorem{theorem}{Theorem}[section]
\newtheorem{prop}[theorem]{Proposition}
\newtheorem{lemma}[theorem]{Lemma}
\newtheorem{corollary}[theorem]{Corollary}
\theoremstyle{definition}
\newtheorem{definition}[theorem]{{\bf Definition}}
\newcommand{\alt}[1]{{\sf A}_{#1}}
\newcommand{\primei}[2]{#1^{(#2)}}
\newcommand{\primeins}[2]{\overline{#1}^{(#2)}}
\renewcommand{\leq}{\leqslant}
\renewcommand{\geq}{\geqslant}
\newcommand{\rbar}[1]{\overline R(#1)}
\newcommand{\rtilde}[1]{\widetilde R(#1)}
\newcommand{\spec}[1]{{\rm Spec}(#1)}
\newcommand{\cl}[1]{{\sf Cl}(#1)}
\begin{document}

\title[Singularities of representation rings]{Computing singularities
  of the spectra of representation rings of finite groups}

\author[A. G. Bueno]{Andr\'e Gimenez Bueno}
\address[Bueno, Martins, Schneider]{ICEx - UFMG \\
Departamento de Matem\'atica \\ 
Av. Ant\^onio Carlos 6627 \\
30123-970 Belo Horizonte MG, Brazil}

\author[R. V. Martins]{Renato Vidal Martins}

\author[E. Oliveira]{Edney Oliveira}
\address[Oliveira]{ICEB - UFOP \\
Departamento de Matem\'atica \\
Campus Morro do Cruzeiro\\
35400-000 Ouro Preto, MG, Brazil} 

\author[C. Schneider]{Csaba Schneider}

%


\begin{abstract}
Let $G$ be a finite group of order $n$, and $\xi$ an $n$-th primitive root of unity. 
Consider the affine scheme $C:=\spc(\Z[\xi]\otimes_\Z R(G))$ where $R(G)$ is the representation ring of $G$. We study the fibers of the formal tangent sheaf of $C$ by computing their dimension and also finding (and measuring) the singularities of $C$. We present explicit computations for noncommutative groups of small order, and develop practical methods to compute these invariants for an arbitrary finite group. 
\end{abstract}

\subjclass[2010]{20C15,20C40,14B05,14H20,14H40}
\keywords{representation rings, Green rings, singularities,
  embedding dimension, Zariski tangent space}

\maketitle

\section*{Introduction}\label{sec:intro}

Let $G$ be a group, $A$ a commutative ring, and $A[G]$ the associated group ring. It is well known that $A[G]$ is a Hopf algebra which represents a diagonalizable affine group scheme over $A$ (see, for instance, \cite[Ch. 3]{CS}, \cite{D} and \cite{W}). If $G$ is abelian, then $A[G]$ is commutative and it is natural to study the affine scheme $X:=\spc(A[G])$. This study is rather simple when
$A$ is a field, and becomes even simpler  when $G$ is also
finite in which case the irreducible components of $X$ are points.

So the next step is considering group rings $\Z[G]$ for abelian groups $G$. Then the complexity increases considerably even if the concern is only set-theoretical. Indeed, the topological structure of $\spc(\Z[G])$ is non-trivial in the sense that its irreducible components may intersect in several points. If, in addition, one is interested in taking the problem towards a refined scheme-theoretical framework, then many additional problems may arise. One of them is identifying and describing singularities by means of their natural invariants, such as,
tangent spaces, formal embedding dimensions, and ramification indices.  This was done by the first named author in collaboration with M.~Dokuchaev in \cite{BD,BD1} (based on \cite{B}), where a thorough description of the scheme $\spc(\Z[G])$ is presented and its singular points are described.

Removing the hypothesis of $G$ being abelian is the point of departure of the present work. Our aim is to obtain similar results to the ones of \cite{BD,BD1} in the case where $G$ is noncommutative. However, we do have to preserve the commutativity of the resulting ring, so we naturally replace $\Z[G]$ with
the \emph{representation ring} (sometimes called the {\em Green ring}) $R(G)$ of $G$,
  since $R(G)\cong \Z[G]$ when $G$ is abelian. Topological descriptions of
  $\spc(R(G))$ were obtained, among others, by M.~Atiyah \cite{A}, G.~Segal \cite{Sg} and  J.~P.~Serre \cite{Sr}. It is clear from these works that, instead of
  dealing with $\spc(R(G))$, 
  it is more natural to consider 
  $C:=\spc(\Z[\xi]\otimes_\Z R(G))$ where $\xi$ is a primitive $n$-th root of  unity and $n$ is the order of $G$.
  The study of $C$ as a scheme is precisely the objective of the present paper. 

  In Sections~\ref{sec:cyclo} and~\ref{sec:jac} we present some results concerning the geometry of cyclotomic rings and the relation between the Zariski tangent space and the Jacobian matrix over domains.  In Section~\ref{sec:Green} we introduce the ring $\rbar G:=R(G)\otimes_\Z\Z[\xi]$, where $R(G)$ is the
  representation ring of $G$,
  and summarize some topological properties of $C=\spec{\rbar G}$. 
  Using the topological descriptions of the aforementioned articles, we
  analyze the scheme-theoretical decomposition of $C$
  into irreducible components. 

  Afterwards, we study the singular points of $C$. In doing so, it is important to compare the dimension of the Zariski tangent space (defined by fibers of the tangent sheaf) and the (formal) embedding dimension at a point; these dimensions coincide if the base scheme is the spectrum of a field isomorphic to the residue field, which is a very common situation. But this may not be the case in general, particularly when we  deal with schemes over $\Z$. 

  Given a point $P$ on a scheme $X$, we denote by $T_P(X/Y)$ the \emph{Zariski tangent space} at $P$ if $X$ is a scheme over $Y$. One may abuse the notation and write $T_P(X/A)$ in case $Y=\spc(A)$. We also recall the definition of the \emph{embedding dimension} $\edim_P(X):=\dim_{k_P}(\mmp/\mmp^2)$, where $\mmp$ is the maximal ideal and $k_P$ the residue field (see, respectively, Sections ~\ref{sec:jac} and ~\ref{sec:Green} for more details).

    The following main theorem describes the precise relationship between
  the embedding dimension of $C$, and the dimensions of the Zariski
  tangent spaces of $C$ over $\Z$ and over $\Z[\xi]$ at a point $P$.

  \begin{thm}\label{th:edim}
    Let $G$ be a finite group of order $n$ and $\xi$ a primitive
    $n$-th root of unity in $\C$. Set $C:=\spec{R(G)\otimes\Z[\xi]}$
    where $R(G)$ is the representation ring of $G$. For any point $P\in C$, let $p$ be the prime number such that $p\in P\cap\Z$.
    Then the following hold.
    \begin{enumerate}
    \item[(i)] If $p\mid n$ and either $p\neq 2$ or $4\mid n$, then
      $$
      \dim_{k_P}T_P(C/\Z[\xi])+1=\dim_{k_P}T_P(C/\Z)= \edim_P(C).
      $$
    \item[(ii)] If $p\nmid n$ or $p=2$ and $4\nmid n$, then
      $$
      \dim_{k_P}T_P(C/\Z[\xi])=\dim_{k_P}T_P(C/\Z)= \edim_P(C)-1.
      $$
    \end{enumerate}
    Furthermore, (i) is valid if and only if $p\in P^2$.
  \end{thm}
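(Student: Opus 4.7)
The approach is to work in the local ring $B := \oo_{C,P}$ with maximal ideal $\mmp$, and to read off each of the three quantities as a $k_P$-dimension of a quotient of $\mmp/\mmp^2$. Writing $\mathfrak{q} := P\cap\Z[\xi]$ and $A := \Z[\xi]_{\mathfrak{q}}$ (a DVR since $\Z[\xi]$ is Dedekind) with uniformizer $\pi$, the standard identifications give $\edim_P(C) = \dim_{k_P}\mmp/\mmp^2$, $\dim_{k_P} T_P(C/\Z[\xi]) = \dim_{k_P}\mmp/(\mmp^2 + \pi B)$, and $\dim_{k_P} T_P(C/\Z) = \dim_{k_P}\mmp/(\mmp^2 + pB)$. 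The theorem then reduces to controlling the images of $\pi$ and $p$ in $\mmp/\mmp^2$.

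The case split is dictated by the ramification of $p$ in $\Z[\xi]$. In case (ii) (either $p\nmid n$, or $p=2$ and $4\nmid n$), the prime $p$ is unramified in $\Z[\xi]$; the subcase $p=2$, $4\nmid n$ is handled via the identity $\Z[\xi_n]=\Z[\xi_{n/2}]$ when $n/2$ is odd. Consequently $p$ is itself a uniformizer of $A$, so $pB=\pi B$ and $T_P(C/\Z)=T_P(C/\Z[\xi])$ at once. In case (i), the ramification index of $p$ in $\Z[\xi]$ equals $\phi(p^a)\geq 2$, where $p^a$ is the largest power of $p$ dividing $n$; hence $p\in(\pi A)^2$ and $pB\subseteq\pi^2 B\subseteq\mmp^2$, so $T_P(C/\Z)=\mmp/\mmp^2$, i.e.\ $\dim T_P(C/\Z)=\edim_P(C)$.

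Both cases are thus reduced to the single claim $\edim_P(C)=\dim T_P(C/\Z[\xi])+1$, equivalently $\pi\notin\mmp^2$, and this will be the main obstacle. Flatness of $B$ over $A$ is not enough on its own, since the extension $A\hookrightarrow A[x]/(x^2-\pi)$ is flat yet has $\pi$ inside the square of the maximal ideal. The plan is to exploit the character-map embedding $\rbar G\hookrightarrow \Z[\xi]^r$ (with $r$ the number of conjugacy classes of $G$), which presents $\rbar G$ as a $\Z[\xi]$-order inside a product of copies of $\Z[\xi]$; this structural input is supplied by Section~\ref{sec:Green}. Localizing at $P$ yields an injection $B\hookrightarrow \bigl(\Z[\xi]^r\bigr)_P$, a semilocal ring whose maximal ideals correspond to the primes of $\Z[\xi]^r$ above $P$ and whose local factors are each isomorphic to $A$. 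Because the extension is integral, every maximal ideal of $\bigl(\Z[\xi]^r\bigr)_P$ contracts to $\mmp$, so the image of $\mmp$ sits in the Jacobson radical. Projecting to any factor $A$, the image of $\mmp^2$ lies in $(\pi A)^2$; if $\pi\in\mmp^2$, then $\pi\in(\pi A)^2$, contradicting $\pi$ being a uniformizer.

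For the ``Furthermore'' clause, case (i) gives $p\in\mathfrak{q}^2\subseteq P^2$ by the ramification calculation combined with the contraction identity $(\pi A)^2\cap\Z[\xi]=\mathfrak{q}^2$. Conversely, in case (ii) the key claim yields $\pi\notin\mmp^2$; since $p=u\pi$ for some $u\in A^\times\subseteq B^\times$, also $p\notin\mmp^2$, and as $\mmp^2=P^2 B$ we conclude $p\notin P^2$.
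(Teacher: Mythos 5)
Your proposal is correct, but it reaches the theorem by a genuinely different route. The paper computes $\dim_{k_P}T_P(C/\Z[\xi])$ via a Jacobian criterion over domains (Theorem~\ref{th:jac}) applied to a presentation $\rbar G=\Z[\xi][x_2,\ldots,x_s]/I_G$, obtains $\edim_P(C)=\dim_{k_P}\widetilde P/\widetilde P^2+1$ from an explicit generating set of $P$ (Lemma~\ref{lem:primedesc}), relates the two tangent spaces through an extended Jacobian whose extra diagonal entry is $\widehat\Phi_n'(\xi)$ (Lemma~\ref{lem:tangent}), and then squeezes the three quantities inside the interval $[\dim_{k_P}T_P(C/\Z[\xi]),\,\dim_{k_P}T_P(C/\Z[\xi])+1]$. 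You instead realize all three quantities as quotients of $\mmp/\mmp^2$ by the images of the relevant base maximal ideals, drive the case split directly by the ramification exponent $\varphi(p^\alpha)$ from~\eqref{eq:factp} (the paper's criterion $\Phi_n'(\zeta)=0$ in Lemma~\ref{lem:cyclomult} is the same dichotomy in another guise), and reduce everything to the single claim $\pi\notin\mmp^2$. Your proof of that claim --- project the integral extension $B\hookrightarrow(\Z[\xi]^{\cl G})_P\cong A^m$ onto one local factor and contradict $\pi$ being a uniformizer --- is a localized form of the paper's argument with the ideal $J=\{\chi\mid\chi(c)\in Q^2\}$ in the proof of Lemma~\ref{lem:primedesc}(iii): both come down to evaluating at the class $c$ and observing that $Q\subseteq P^2$ would force $Q=Q^2$. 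Your identification of the factors of $(\Z[\xi]^{\cl G})_P$ is also right, since every component of $C$ through $P$ meets the fibre over $Q$ exactly in $P$, so no fraction-field factors occur and at least one factor $A$ survives. What your approach buys is independence from the Jacobian machinery and a cleaner conceptual statement of where the ``$+1$'' comes from; what it hides is that the ``standard identifications'' $\dim_{k_P}T_P(C/A)=\dim_{k_P}\mmp/(\mmp^2+\mm_A B)$ require separability of the residue field extensions (automatic here because all residue fields are finite) --- the surjection half is the paper's Proposition~\ref{thmgr1}(i), and the injectivity half is precisely what the paper extracts from Theorem~\ref{th:jac}. A final cosmetic point: like the paper, you are implicitly assuming $P$ is a closed point, which is forced by the hypothesis that a prime $p$ lies in $P\cap\Z$.
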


  The actual values of the dimensions in Theorem~\ref{th:edim} can be
  calculated using a suitable Jacobian matrix; see Section~\ref{sec:comp}.
  
As was done  in \cite{BD,BD1,B}, we also address here the case where $G$ is commutative in Corollary \ref{corabl}, where we derive a formula for the dimension of the tangent space by means of the cyclic group decomposition available for abelian groups and the general results we obtained for arbitrary groups. 

Based on Theorem~\ref{th:edim}, in Section~\ref{sec:comp}, we present a practical method for the computation of the dimensions of the Zariski tangent spaces and the embedding
dimension for finite groups. The method can be presented in the form of an algorithm which was implemented in the computational algebra system Magma~\cite{Magma}. The implementation enabled us to list all singular points and their invariants of the spectra of representation rings
defined by noncommutative  groups of larger order.

\subsection*{Acknowledgement}
The results presented in this paper were obtained during the PhD course
of the third author under the supervision by the first and the second authors
partially funded by a CNPq PhD scholarship
(process number: 141115/2011-4)
at the {\em Universidade Federal de Minas Gerais} in Belo Horizonte (Brazil);
Oliveira is also grateful to the {\em Universidade Federal de Ouro Preto}
for their support.
In addition, Martins and Schneider were supported by the CNPq projects
{\em Produtividade em Pesquisa}
(project no.: 305240/2018-8 and 308773/2016-0) 
and {\em Universal} (project no.: 421624/2018-3). We thank Ethen Cotterill and
Eduardo Tengan for their suggestions.

\section{The geometry of cyclotomic extensions}\label{sec:cyclo}

The main objective of this paper is to describe the singularities of
the spectra of rings
that are related to cyclotomic extensions $\Z[\xi]$ of $\Z$. In this section, we
summarize some  results concerning $\Z[\xi]$ that will be necessary in the
subsequent sections. As a general reference on this topic, the reader is recommended
to consult~\cite[Chapter~1]{Neukirch}.

Let $n\geq 2$ and suppose that $\Phi_n(x)\in\Z[x]$ is the $n$-th cyclotomic polynomial. It is
well known that $\Phi_n(x)$ is an irreducible polynomial in $\Z[x]$. Set
$\Z[\xi]=\Z[x]/(\Phi_n(x))$; that is, $\Z[\xi]$ is the extension of $\Z$ by
a primitive $n$-th root of unity $\xi$. It is well-known that $\Z[\xi]$
coincides with the ring of integers of the cyclotomic field
$\Q(\xi)$; see~\cite[(10.2) Proposition]{Neukirch}.

By \cite[(10.3) Proposition]{Neukirch}, if $p$ is a prime and $p^\alpha$ is
the largest $p$-power that divides $n$, then $\Phi_n(x)$ can be factorized
in $\F_p[x]$ as
$$
\Phi_n(x)=f_1(x)^{\varphi(p^\alpha)}\cdots f_r(x)^{\varphi(p^\alpha)}
$$
where $f_1(x),\ldots,f_r(x)\in\F_p[x]$ are irreducible polynomials
of the same degree.
Furthermore, the  ideal $(p)$ of $\Z[\xi]$ admits the factorization
\begin{equation}\label{eq:factp}
(p)=(f_1(\xi))^{\varphi(p^\alpha)}\cdots (f_r(\xi))^{\varphi(p^\alpha)}.
\end{equation}
Note that the common exponent $\varphi(p^\alpha)$ appearing in these
expressions is greater than one if and only
if
\begin{equation}\label{eq:divisibility}
  p\mid n\quad \mbox{and either}\quad p\neq 2\mbox{ or }4\mid n,
  \end{equation}
and this is the reason
why this divisibility condition appears in Theorem~\ref{th:edim}.

The scheme $\spec{\Z[\xi]}$ can be described as
\begin{align*}
  \spec{\Z[\xi]}&=\big\{(0)\big\}\cup \big\{(p,f(\xi))\ \big{|}\ \mbox{$p\in\Z$ is a prime and}\\
& \ \ \ \ \ \ \ \ \ \  \ \ \ \ \ \ \ \ \ \ \ \ \ \ \ \ \ \ \ \mbox{ $f(x)$ is an irreducible factor of $\Phi_n(x)$ 
  in $\F_p[x]$}\,\big\}.
\end{align*}
In particular, each non-zero prime of $\Z[\xi]$ is maximal and $\dim\Z[\xi]=1$.
  The following result  describes the local structure of $\Z[\xi]$.

\begin{lemma}\label{lem:cyclomult}
  Let $Q=(p,f(\xi))$ be a closed point in $\spec{\Z[\xi]}$, set $k:=\Z[\xi]/Q$
  and let $\zeta=\xi+Q$ so that $\zeta$ is a root of $\Phi_n(x)$ in $k$.
  Then
  $$
  \dim_k Q/Q^2=1,
  $$
  and, in particular,  $\spec{\Z[\xi]}$ is smooth.
  Furthermore, the following hold.
  \begin{enumerate}
  \item [(i)] If condition~\eqref{eq:divisibility} is valid, then
    $p\in Q^2$ and so $Q/Q^2=(f(\xi)+Q^2)$, Moreover, $\Phi_n'(\zeta)=0$.
    \item [(ii)] If condition~\eqref{eq:divisibility} is not valid, 
    then $Q/Q^2=(p+Q^2)$, and $\Phi_n'(\zeta)\neq 0$.
  \end{enumerate}
\end{lemma}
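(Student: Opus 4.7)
The plan is to exploit that $\Z[\xi]$ is the ring of integers of the number field $\Q(\xi)$, hence a Dedekind domain, so that every non-zero prime is maximal and the localization $\Z[\xi]_Q$ is a discrete valuation ring with principal maximal ideal $Q\Z[\xi]_Q$. From this one obtains $\dim_k(Q/Q^2)=1$ and that $\spec{\Z[\xi]}$ is regular at every point, independently of which of the cases (i) or (ii) occurs. The content of (i) versus (ii) is therefore only the identification of which of the natural generators $p$, $f(\xi)$ of $Q$ is a uniformizer.

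To carry out that identification, I would write $f=f_i$ for the irreducible factor of $\Phi_n\bmod p$ matching the prime $Q=(p,f_i(\xi))$. Reading off~\eqref{eq:factp} yields the ramification index $v_Q(p)=\varphi(p^\alpha)$, where $v_Q$ denotes the discrete valuation attached to $Q$. Since the images of $p$ and $f(\xi)$ generate the maximal ideal $Q\Z[\xi]_Q$, at least one of them must be a uniformizer. In case (i), condition~\eqref{eq:divisibility} forces $\varphi(p^\alpha)\geq 2$, so $p\in Q^{\varphi(p^\alpha)}\subseteq Q^2$ cannot be a uniformizer; hence $v_Q(f(\xi))=1$ and $Q/Q^2=(f(\xi)+Q^2)$. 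In case (ii) we have $\varphi(p^\alpha)=1$, so $p$ itself is a uniformizer and $Q/Q^2=(p+Q^2)$. The final clause is then immediate, since $p\in Q^2$ if and only if $v_Q(p)\geq 2$, if and only if $\varphi(p^\alpha)\geq 2$, which is exactly condition~\eqref{eq:divisibility}.

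The assertions about $\Phi_n'(\zeta)$ would be read off by reducing the factorization $\Phi_n(x)\equiv\prod_{j=1}^{r} f_j(x)^{\varphi(p^\alpha)}\pmod p$ to the residue field $k=\Z[\xi]/Q\cong\F_p[x]/(f_i(x))$, in which $\zeta$ is a root of $f_i$. In case (i), $\zeta$ is a root of $\Phi_n\bmod p$ of multiplicity $\varphi(p^\alpha)\geq 2$, so $\Phi_n'(\zeta)=0$ in $k$; in case (ii), $\Phi_n\bmod p$ is a product of distinct irreducibles, hence a separable polynomial, so $\Phi_n'(\zeta)\neq 0$. The main subtlety of the argument is the valuation identity $v_Q(p)=\varphi(p^\alpha)$, which is exactly the ramification information encoded in~\eqref{eq:factp} combined with unique factorization of ideals in the Dedekind domain $\Z[\xi]$; everything else is a formal consequence of DVR structure and multiplicity of roots modulo $p$.
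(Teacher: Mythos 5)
Your argument is correct and follows essentially the same route as the paper: both rest on the fact that $\Z[\xi]_Q$ is a discrete valuation ring (the paper phrases this as $\Z[\xi]$ being a one-dimensional noetherian normal domain) together with the factorization~\eqref{eq:factp}, and both settle (i) versus (ii) by deciding which of $p$, $f(\xi)$ generates $Q/Q^2$ and read off the $\Phi_n'(\zeta)$ claims from the multiplicity of $\zeta$ as a root of $\Phi_n \bmod p$. The only difference is cosmetic: you package the case analysis in the language of the valuation $v_Q$ and ramification indices, where the paper argues directly with the ideal containments $p\in(f(\xi)^2)$ and the invertibility of $f_i(\xi)$, $i\geq 2$, in $\Z[\xi]_Q$.
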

\begin{proof}
 Since $\Z[\xi]$ coincides with the 
 ring of integers of the cyclotomic field $\Q(\xi)$, the ring
 $\Z[\xi]$ is a one-dimensional noetherian normal domain. This
implies that the unique maximal ideal $\mm$ of the localization $\Z[\xi]_Q$ is 
principal and $\dim_k \mm/\mm^2=1$ where $k=\Z[\xi]/Q\cong (\Z[\xi]_Q)/\mm$ is the residue field (see~\cite[Theorem~11.2]{M}).  Also note that, since $Q$ is a maximal ideal of $\Z[\xi]$, we have that 
$\mm/\mm^2\cong Q/Q^2$.

Let us prove statements~(i) and~(ii). Assume without loss of generality
that $f(x)=f_1(x)$ in expression~\eqref{eq:factp}.
If condition~\eqref{eq:divisibility} holds,
then $\varphi(p^\alpha)>1$, and so, by~\eqref{eq:factp}, $p$ is contained in the ideal $(f(\xi)^2)$. Thus, in this case, $p$ is contained in $Q^2$ and
$Q/Q^2$ must be generated by
$f(\xi)+Q^2$. Also, $\zeta\in k$ is a multiple root of $\Phi_n(x)$, and so
$\Phi_n'(\zeta)=0$.
If, on the other hand, condition~\eqref{eq:divisibility} does not
hold, then $\varphi(p^\alpha)=1$ and the elements $f_i(\xi)$ with $i\geq 2$
are invertible in  $\Z[\xi]_Q$. Hence in this case,
$f(\xi)\in (p)\subseteq \Z[\xi]_Q$ and in particular, $Q/Q^2$ is generated by $p+Q^2$. In this case $\Phi_n(x)$ has no multiple roots in $k$, and hence
$\Phi_n'(\zeta)\neq 0$.
\end{proof}

Lemma~\ref{lem:cyclomult} will be very useful when we start comparing the
embedding dimension and tangency at a given point of the spectrum
of the representation ring of a finite group. So, for the sake of later use, we opted to state it in this section.

\section{Tangent Spaces and Jacobians}\label{sec:jac}

In order to compute the main invariants of a singular point of the spectra  associated with representation rings of finite groups, we need a theorem that links the dimension of the Zariski tangent space to the Jacobian  matrix. In this section, we state and prove Theorem~\ref{th:jac} due to the lack of references
in the literature for such a result that is valid over domains.

We start by recalling the scheme-theoretic definition of the tangent space at a point on a given scheme (see \cite[Def. VII.5.4]{AK}).

\begin{definition}
\label{defted}
Let $X$ be a scheme over $Y$ and $P\in X$ be a point. Let also $\oo_P$ be the local ring of $P$ on $X$ and $\mmp$ its maximal ideal. Set $k_P:=\oo_P/\mmp$ to be the residue field. The \emph{Zariski tangent space} of $X/Y$ at $P$ is defined by
$$
T_PX=T_P(X/Y):={\rm Hom}_{k_P}(\Omega_{X/Y}\otimes_{\oo_X}k_P,k_P)
$$
where $\Omega_{X/Y}$ is the sheaf of differentials. 
\end{definition}

In Definition~\ref{defted},  $\Omega_{X/Y}\otimes_{\oo_X}k_P$ is a skyscraper sheaf supported at $P$ and is identified here with its stalk at $P$, which is a $k_P$ vector space, and so the definition makes sense.

The main result of this section claims that under rather general conditions,
the dimension of $T_P X$ can be computed by computing the rank of a suitable
Jacobian matrix.

\begin{theorem}\label{th:jac}
Let $A[\mathbf x]=A[x_1,\ldots,x_s]$ be a polynomial ring over a domain $A$, let
$I=(f_1,\ldots,f_r)$ be an ideal of $A[\mathbf x]$ and set
$R=A[\mathbf x]/I$. For $P\in\spec R$, let $k_P$ be the residue field of $P$.
For $f\in A[\mathbf x]$, let $\widehat f$ be the 
corresponding element in $k_P[\mathbf x]$ and let $\overline{f}\in k_P$ be the residue class of $f$. Consider the ideal $
\widehat I =(\widehat f_1,\ldots,\widehat f_r) \subseteq k_P[\mathbf x]
$, and let $\widehat{P}$ be the closed point in $\spec {k_P[\mathbf x]/\widehat I}$ corresponding to $\alpha :=(\overline{x_1},\ldots,\overline{x_s})\in k_P^s$. Set
    $$
  J:=\begin{pmatrix}
  \displaystyle\frac{\partial \widehat f_1}{\partial x_1}(\alpha) & \cdots &
   \displaystyle \frac{\partial \widehat f_1}{\partial x_s}(\alpha)\\
    \vdots  & \ddots & \vdots\\
    \displaystyle\frac{\partial \widehat f_r}{\partial x_1}(\alpha) & \cdots &
    \displaystyle\frac{\partial \widehat f_r}{\partial x_s}(\alpha)\end{pmatrix}
  $$
  to be the Jacobian matrix.
  Denoting by $\mm_{\widehat P}$ the unique maximal ideal of the localization $(k_P[\mathbf x]/\widehat I)_{\widehat P}$, we have that
  $$
  \dim_{k_P} T_P(\spec R/A)=\dim_{k_P} \mm_{\widehat P}/\mm_{\widehat P}^2=s-{\rm rank}\,J.
  $$
\end{theorem}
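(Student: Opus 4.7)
The plan is to reduce both dimension computations to the same cokernel computation over $k_P$ via the conormal exact sequence for Kähler differentials. For $\dim_{k_P}T_P(\spec R/A)$, I would invoke the second fundamental exact sequence attached to the surjection $A[\mathbf x]\to R$:
$$
I/I^2\xrightarrow{d}\Omega_{A[\mathbf x]/A}\otimes_{A[\mathbf x]}R\to\Omega_{R/A}\to 0,
$$
where $d(f+I^2)=\sum_j(\partial f/\partial x_j)\,dx_j$ and $\Omega_{A[\mathbf x]/A}$ is free on $dx_1,\dots,dx_s$. Tensoring with $k_P$ over $R$ presents $\Omega_{R/A}\otimes_R k_P$ as the cokernel of a map $k_P^r\to k_P^s$ whose matrix has $(j,i)$-entry equal to $(\partial f_i/\partial x_j)\bmod P$. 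That reduction equals $(\partial\widehat f_i/\partial x_j)(\alpha)$ by the elementary compatibility of formal differentiation with the ring homomorphism $A[\mathbf x]\to k_P[\mathbf x]$ followed by evaluation of $x_j$ at $\alpha_j=\overline{x_j}$. Hence the matrix is $J^T$, and dualising over $k_P$ yields $\dim_{k_P}T_P(\spec R/A)=s-{\rm rank}\,J$.

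For the equality with $\dim_{k_P}\mm_{\widehat P}/\mm_{\widehat P}^2$, I would apply the very same conormal-sequence computation to $\widehat R:=k_P[\mathbf x]/\widehat I$, now viewed as a $k_P$-algebra. The presentation of $\Omega_{\widehat R/k_P}\otimes_{\widehat R}k_P$ has matrix $J$ itself — this time by definition rather than by reduction — so its $k_P$-dimension is again $s-{\rm rank}\,J$. Since $\widehat P$ is $k_P$-rational (its residue field coincides with $k_P$), the standard identification $\Omega_{\widehat R/k_P}\otimes_{\widehat R}k_P\cong\mm_{\widehat P}/\mm_{\widehat P}^2$ holds: for any local $k$-algebra $(B,\mm)$ with $B/\mm=k$, one has $\Omega_{B/k}\otimes_B k\cong\mm/\mm^2$ via the universal derivation restricted to $\mm$. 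Chaining the two identifications delivers the second equality.

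The main obstacle is the careful book-keeping around the two reductions at play: coefficients pass from $A$ to $k_P$ via $A\to A/(P\cap A)\hookrightarrow k_P$ (producing $f\mapsto\widehat f$), while variables are evaluated at $\alpha_j=\overline{x_j}\in k_P$, and one must verify that these operations commute with formal partial differentiation so that the cokernel matrix over $k_P$ really coincides with the Jacobian $J$ as stated. Once this compatibility is in place, the rest is a formal diagram chase; the hypothesis that $A$ is a domain enters only to ensure that $k_P$ and $\alpha\in k_P^s$ behave as described and that the natural maps $A\to R\to k_P$ factor cleanly, which is presumably why the authors emphasize the absence of such a statement over domains in the literature.
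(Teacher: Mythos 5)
Your proposal is correct, and its first half coincides with the paper's argument: both present $\Omega_{R/A}\otimes_R k_P$ via the conormal (second fundamental) exact sequence for $A[\mathbf x]\twoheadrightarrow R$, tensor with $k_P$, check that reduction of coefficients and evaluation at $\alpha$ commute with formal differentiation, and read off $\dim_{k_P}T_P(\spec R/A)=s-\mathrm{rank}\,J$ as the corank of $J^T$. Where you genuinely diverge is the second equality. The paper proves $\dim_{k_P}\mm_{\widehat P}/\mm_{\widehat P}^2=s-\mathrm{rank}\,J$ by hand: it introduces the explicit $k_P$-linear map $\psi(f)=\sum_j(\partial f/\partial x_j)(\alpha)(dx_j\otimes 1)$, verifies $M_\alpha\cap\ker\psi=M_\alpha^2$ (so $M_\alpha/M_\alpha^2\cong k_P^s$), computes $\dim_{k_P}(\widehat I+M_\alpha^2)/M_\alpha^2=\mathrm{rank}\,J$, and finishes with a dimension count on the chain $M_\alpha\supseteq \widehat I+M_\alpha^2\supseteq M_\alpha^2$. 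You instead run the conormal sequence a second time for $k_P[\mathbf x]\twoheadrightarrow\widehat R$ and invoke the standard isomorphism $\Omega_{B/k}\otimes_B k\cong\mm/\mm^2$ at the $k_P$-rational point $\widehat P$ (valid here since the residue field of $\widehat P$ is $k_P$ itself). Your route is shorter and more conceptual, at the cost of citing that identification as a black box; the paper's route is self-contained and in effect reproves that identification in the polynomial-ring setting, which fits its stated motivation of supplying a reference-free argument. One small remark: the domain hypothesis on $A$ is not really what makes $dx_1\otimes 1,\ldots,dx_s\otimes 1$ independent (freeness of $\Omega_{A[\mathbf x]/A}$ suffices over any commutative ring), so your closing speculation about its role, like the paper's own aside, is inessential to the proof.
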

\begin{proof}
As is well-known, $\Omega_{A[\mathbf x]/A}$ is a free $A[\mathbf x]$-module
generated by $dx_1,\ldots,dx_s$; that is,
$$
\Omega_{A[\mathbf x]/A}=A[\mathbf x]dx_1\oplus\cdots
\oplus A[\mathbf x]dx_s.
$$
Furthermore, there is an exact sequence 
\begin{equation}\label{eq:exseq}
I/I^2\longrightarrow \Omega_{A[\mathbf x]/A}\otimes_{A[\mathbf x]}R\longrightarrow
\Omega_{R/A}\longrightarrow 0
\end{equation}
of $R$-modules where the first map
$I/I^2\rightarrow \Omega_{A[\mathbf x]/A}\otimes_{A[\mathbf x]}R$ is given by $f+I^2\mapsto df\otimes 1$.
Suppose that $P\in\spec R$  and let $R_P$ be the
localization of $R$ at $P$. Then $R_P$ is a local ring with unique maximal
ideal $\mm_P$ and with residue field $k_P=R_P/\mm_P$. 
Localizing the right exact sequence 
\eqref{eq:exseq} at $P$ and then applying $-\otimes_{R_P}k_P$, we obtain
$$
  (I/I^2)_P\otimes_{R_P}k_P
  \longrightarrow \Omega_{A[\mathbf x]/A}\otimes_{A[\mathbf x]}R_P\otimes_{R_P}k_P\
  \longrightarrow
\Omega_{R_P/A}\otimes_{R_P} k_P\longrightarrow 0
$$
which reduces to
$$
  (I/I^2)_P\otimes_{R_P}k_P
  \longrightarrow \Omega_{A[\mathbf x]/A}\otimes_{A[\mathbf x]}k_P\
  \longrightarrow
\Omega_{R_P/A}\otimes_{R_P} k_P\longrightarrow 0.
$$

Noting that $(I/I^2)_P\otimes_{R_P}k_P$ is generated by
$(f_1+I^2)\otimes 1,\ldots,(f_r+I^2)\otimes 1$, we can write
$\Omega_{R_P/A}\otimes_{R_P} k_P$ as 

$$
 \Omega_{R_P/A}\otimes_{R_P} k_P\cong\frac{(A[\mathbf x]dx_1\oplus\cdots
   \oplus A[\mathbf x]dx_s)\otimes_{A[\mathbf x]} k_P}{
   (df_1\otimes  1,\ldots,df_r\otimes  1)_{R_P}}
.
$$
 If $f\in A[\mathbf x]$, then $df=\sum_j(\partial f/\partial x_j) dx_j$,
 which implies that the $R_P$-module in the denominator of the last displayed
equation is generated by
$$
\left(\sum_{j=1}^s(\partial f_1/\partial x_j) dx_j\right)\otimes  1,
\ldots,\left(\sum_{j=1}^s
(\partial f_r/\partial x_j)dx_j\right)\otimes 1.
$$
Moreover, for $i\in\{1,\ldots,s\}$,
\begin{align*}
\sum_{j=1}^s\left(\frac{\partial f_i}{\partial x_j}dx_j\otimes 1
\right)&=
\sum_{j=1}^s \left(dx_j\otimes \overline{\frac{\partial f_i}{\partial x_j}}\right)
\\&=
\sum_{j=1}^s\left(dx_j\otimes \frac{\partial\widehat f_i}{\partial x_j}(\alpha)\right)=
\sum_{j=1}^s\frac{\partial\widehat f_i}{\partial x_j}(\alpha)\left(dx_j\otimes 1\right).
\end{align*}
This gives that
$$
\Omega_{R_P/A}\otimes_{R_P} k_P\cong
W/S
$$
where
$$
W=(dx_1\otimes 1,\ldots, dx_s\otimes 1)_{k_P}
$$
and
$$
S=
     {\left(\sum_{j=1}^s \frac{\partial\widehat f_1}{\partial x_j}(\alpha)
       (dx_j\otimes 1),
       \ldots,\sum_{j=1}^s \frac{\partial\widehat f_r}{\partial x_j}(\alpha)
       (dx_j\otimes 1)
     \right)_{k_P}}.
     $$
Since $A$ is a domain, $dx_1\otimes 1,\ldots, dx_s\otimes 1$
     are linearly independent over $k_P$, and so $W\cong k_P^s$. On the other hand, any $\omega\in S$ can be written as
$$
\omega= (dx_1\otimes 1\ \ldots\ dx_s\otimes 1)
 J^T\begin{pmatrix}
  a_1 \\
  \vdots\\
 a_r
  \end{pmatrix}
$$
where $J^T$ is the transpose of the Jacobian matrix and $(a_1,\ldots,a_r)\in k_P^r$. Thus $\dim_{k_P} S=\mbox{rank}\,J$ and 
    $$
  \dim_{k_P} W/S=s-\mbox{rank}\,J.
  $$
  This proves the equation $\dim_{k_P} T_P(\spec R/A)=s-\mbox{rank}\,J$.

Now set $k_P[\mathbf x]=k_P[x_1,\ldots,x_s]$ and let us define the $k_P$-homomorphism
\begin{gather*}
\begin{matrix}
\psi:	&k_p[\mathbf x]& \longrightarrow & W  \\
		& f  & \longmapsto     & \displaystyle  \sum_{j=1}^s\left(\frac{\partial f}{\partial x_j}(\alpha)
(dx_j\otimes 1)\right).
		\end{matrix}
\end{gather*}
Set also $\alpha_i:=\overline{x_i}$ for all $i\in\{1,\ldots,s\}$ so that $\alpha=(\alpha_1,\ldots,\alpha_s)$ and consider the ideal
$$
M_\alpha:=(x_1-\alpha_1,\ldots,x_s-\alpha_s)\subseteq k_P[\mathbf x].
$$ 
 Since
     $\psi(x_i-\alpha_i)=dx_i\otimes 1$, we have that $\psi|_{M_\alpha}$ is surjective.  Let us show that $M_\alpha\cap\ker\psi=M_\alpha^2$.
     If $f,g\in M_\alpha$, then clearly $\partial (fg)/\partial x_i(\alpha)=0$ due to Leibniz's rule and the fact that $f(\alpha)=g(\alpha)=0$. 
     Hence $\psi(fg)=0$, and so $M_\alpha^2\subseteq\ker\psi$.
    Now suppose $f\in M_\alpha$ and $\psi(f)=0$. As
     $f=g_1(x_1-\alpha_1)+\cdots+g_s(x_s-\alpha_s)$, we have that 
     $$
  0=\frac{\partial f}{\partial x_i}(\alpha)= \sum_{j=1}^s\left(\frac{\partial g_j}{\partial x_i}(x_j-\alpha_j)-
  g_j\frac{\partial(x_j-\alpha_j)}{\partial x_i}\right)(\alpha)=
  g_i(\alpha).
  $$
  Thus $g_i(x)\in M_\alpha$ for all $i$, and hence $f\in M_\alpha^2$.  This shows that
  $\psi$ induces an isomorphism $M_\alpha/M_\alpha^2\cong W$ and hence
  $\dim_{k_P} M_\alpha/M_\alpha^2=\dim_{k_P} W=s$.

  Setting $\widehat I =(\widehat f_1,\ldots,\widehat f_r)$, we
  note that $\widehat I\subseteq M_\alpha$. Furthermore, if $f\in\widehat I$ then $f=g_1\widehat f_1+\cdots+g_r\widehat f_r$ and hence
  \begin{align*}
  \psi(f)&=\sum_{i=1}^r\frac{\partial \widehat f_i}{\partial x_1}(\alpha)g_i(\alpha)
  (dx_1\otimes
  1)+\cdots+
  \sum_{i=1}^r\frac{\partial \widehat f_i}{\partial x_s}(\alpha)g_i(\alpha)
  (dx_s\otimes
  1)\\&=
  (dx_1\otimes 1\ \ldots\ dx_s\otimes 1)
 J^T\begin{pmatrix}
  g_1(\alpha) \\
  \vdots\\
  g_r(\alpha)
  \end{pmatrix}
    \end{align*}
so 
  $$
  \dim_{k_P}\psi(\widehat I)=\dim_{k_P} (\widehat I+M_{\alpha}^2)/M_{\alpha}^2=\mbox{rank}\,J.
  $$
Therefore
 \begin{align*}
  s&=\dim_{k_P} M_{\alpha}/M_{\alpha}^2\\
    &=\dim_{k_P} M_{\alpha}/(\widehat I+M_{\alpha}^2)+\dim_{k_P} (\widehat I+M_{\alpha}^2)/M_{\alpha}^2\\
    &=\dim_{k_P} ((M_{\alpha}/\widehat{I})/((\widehat I+M_{\alpha}^2)/\widehat I)+\mbox{rank}\,J\\
    &=\dim_{k_P} ( \mm_{\widehat P}/\mm_{\widehat P}^2)+\mbox{rank}\,J
 \end{align*}
and the proof is complete.
\end{proof}



As was already pointed out, Theorem~\ref{th:jac} combined with
our results  in Sections~\ref{sec:Green} and~\ref{sec:comp}
enables us to explicitly calculate the invariants
of singularities for representation rings of finite groups.

\section{The geometry of the spectra of representation rings}\label{sec:Green}

The main subject of this article is the affine scheme associated with
the representation ring of a finite group. In this section, whose first part is largely
based on Serre's text~\cite[Section~11.4]{Sr}, we give an overview
of its structure. This envolves a brief study of the natural interactions of
cyclotomic extensions with representation rings.


Let $G$ be a finite group of order $n$ and let $\xi$ be a primitive
$n$-th roof of unity in $\C$. Letting $\cl G$ denote the set of
conjugacy classes of $G$, a
complex character $\chi$ of $G$ can be viewed as a function
$$
\chi:\cl G\rightarrow \Z[\xi].
$$
Suppose that $\chi_1,\chi_2,\ldots,
\chi_s$ are the irreducible complex characters of $G$. The \emph{representation ring} $R(G)$ is the set of
$\Z$-linear combinations of $\chi_1,\ldots,\chi_s$ viewed as a ring
under the product operation between characters. Then $R(G)$ is a commutative
ring whose identity element is the trivial character, which we shall assume throughout this work to be $\chi_1$. The elements of $R(G)$ are sometimes called {\em virtual characters} of $G$. The additive group of  $R(G)$ coincides with the Grothendieck
group of the category of finitely generated $\C[G]$-modules.

Our main concern here is the study of the ring 
$$\rbar G:=R(G)\otimes_\Z
\Z[\xi]
$$
or, more precisely, the affine scheme
$$
C:=\spec {\rbar G}.
$$
Note that $
\rbar G$ can be viewed as the set of $\Z[\xi]$-linear combinations
of $\chi_1,\ldots,\chi_s$ and it is also a commutative ring with the identity
element $\chi_1$. 
The ring $\rbar G$ can also  be seen as a subring of $\Z[\xi]^{\cl G}$ of
functions from $\cl G$ to $\Z[\xi]$, which is naturally
isomorphic to the direct sum $\Z[\xi]\oplus\cdots\oplus \Z[\xi]$
of $|\cl G|$ copies of $\Z[\xi]$.

The embeddings
\begin{equation}\label{eq:ringembed}
  \Z\rightarrow \Z[\xi]\rightarrow\rbar G\rightarrow \Z[\xi]^{\cl G}
\end{equation}
  defined by
  $\xi \mapsto \chi_1\otimes\xi$ and  $\chi_i\otimes\xi\mapsto
  \xi\chi_i$, respectively, 
induce maps of schemes
\begin{equation}\label{eq:specproj}
  \spec{\Z[\xi]^{\cl G}}\rightarrow C \rightarrow\spec{\Z[\xi]}\rightarrow
  \spec{\Z}.
\end{equation}
Since $\Z[\xi]^{\cl G}$ is a finitely generated $\Z$-module,
we have that
the extension $\Z\leq \Z[\xi]^{\cl G}$ is integral and so the maps
in~\eqref{eq:specproj} are surjective.

For $Q\in\spec{\Z[\xi]}$ and $c\in\cl G$, set
$$
\primei Qc :=\{\chi:\cl G\rightarrow\Z[\xi]\mid \chi(c)\in Q\}.
$$
Then 
$$
\spec{\Z[\xi]^{\cl G}}=\{\primei Qc\mid Q\in\spec{\Z[\xi]}\mbox{ and } c\in \cl G\}.
$$
For a prime $\primei Qc\in\spec{\Z[\xi]^{\cl G}}$, let 
$$
\primeins Qc := \primei Qc\cap \rbar G.
$$
Considering the maps in~\eqref{eq:ringembed} as embeddings,
we obtain that
$$
C=\{\primeins Qc\mid Q\in\spec{\Z[\xi]}\mbox{ and }c\in\cl G\}.
$$
In particular, as the minimal primes of $\Z[\xi]^{\cl G}$
are of the form
$$
\primei{(0)}c=\{\chi\in \Z[\xi]^{\cl G}\mid \chi(c)=0\}\quad\mbox{where}\quad c\in\cl G,
$$
the minimal primes of $\rbar G$ are the ideals
$\primeins {(0)}c=\primei{(0)}c\cap \rbar G$ with $c\in\cl G$.
This argument also implies that $\dim \rbar G=1$.

Let $P=\primeins Qc$ be a point of $C$ with $Q\neq(0)$, let
$\mm_P$ be the unique maximal ideal of the localization
$\rbar G_P$, and set $k_P$ to be the residue field of $P$.
Then $\Z[\xi]$ can naturally be identified with
  $\{\alpha\chi_1\mid \alpha\in\Z[\xi]\}$ inside $\rbar G$, and so
  $\alpha\chi_1\in P$ if and only if $\alpha\chi_1(c)=\alpha\in Q$. Hence
  $P\cap \Z[\xi]=Q$. Furthermore, since $P$ is a maximal ideal,
\begin{equation}\label{eq:kp}
  k_P\cong \Z[\xi]/Q.
  \end{equation}
In particular, $k_P$ is a finite field.

Recall that if $g\in G$ and $p$ is a prime, then
$g$ can be written uniquely as $g=g_u^{(p)}g_r^{(p)}$ where
$g_u^{(p)},g_r^{(p)}\in\left<g\right>$, the order of $g_u^{(p)}$
is a $p$-power, while $p$ does not divide the order of $g_r^{(p)}$.
An element $g$ is said to be {\em $p$-regular} if $p\nmid |g|$.
If $g$ is $p$-regular, then $g_u^{(p)}=1$ and $g_r^{(p)}=g$. If $c\in \cl G$
is a conjugacy class, then $c_r^{(p)}=\{g_r^{(p)}\mid g\in c\}$ is a conjugacy
class. A class $c$ is said to be $p$-regular if it consists of
$p$-regular elements of $g$. If $c$ is $p$-regular, then clearly
$c_r^{(p)}=c$.

For the proof of the following result,
see \cite[Lem. 15.2]{isaacs} and \cite[Prp.~30]{Sr}.

\begin{lemma}\label{lem:charval}
  Let $G$ be a finite group and let $Q$ be a maximal ideal of $\Z[\xi]$ such that
  ${\rm char}(\Z[\xi]/Q)=p$. Then the following are equivalent for
  $c,d\in\cl G$:
  \begin{itemize}
    \item[(i)] $\chi(c)\equiv \chi(d)\ {\rm mod}\ Q$ for all characters $\chi$ of $G$;
    \item[(ii)] $c^{(p)}_r=d^{(p)}_r$.
  \end{itemize}
  \end{lemma}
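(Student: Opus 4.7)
The plan is to prove the two implications separately, since their natures differ considerably.

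For $(\text{ii})\Rightarrow(\text{i})$, which is elementary, I would use the canonical decomposition $g = g_u g_r$ of a group element into commuting $p$-part and $p$-regular part. Given any complex representation $\rho$ affording $\chi$, the matrices $\rho(g_u)$ and $\rho(g_r)$ commute and are diagonalizable, so they can be simultaneously diagonalized; writing $\chi(g) = \sum_i \mu_i \nu_i$ with $\mu_i$ the eigenvalues of $\rho(g_u)$ (of $p$-power order) and $\nu_i$ those of $\rho(g_r)$, we also have $\chi(g_r) = \sum_i \nu_i$. Since $|g|$ divides $n$, all $\mu_i$ lie in $\Z[\xi]$; in the characteristic-$p$ field $\Z[\xi]/Q$ the image $\overline{\mu_i}$ satisfies $\overline{\mu_i}^{p^k}=1$ for some $k$, and the injectivity of Frobenius on the field forces $\overline{\mu_i}=1$. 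Hence $\chi(g) \equiv \chi(g_r) \pmod{Q}$. If $c_r^{(p)} = d_r^{(p)}$, choosing $g\in c$ and $h\in d$ with $g_r$ conjugate to $h_r$ yields $\chi(c) \equiv \chi(g_r) = \chi(h_r) \equiv \chi(d) \pmod{Q}$.

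The converse $(\text{i})\Rightarrow(\text{ii})$ is the main obstacle. By the first direction, each ordinary character $\chi$, reduced modulo $Q$, descends to a well-defined function on the set of $p$-regular classes of $G$. The crucial nontrivial input is that the collection of these reduced characters \emph{separates} $p$-regular classes; equivalently, the matrix whose entry at $(\chi_i,c)$ is $\overline{\chi_i(c)}\in\Z[\xi]/Q$, with rows indexed by the irreducible ordinary characters and columns by the $p$-regular classes, has rank equal to the number of $p$-regular classes. This is a cornerstone of Brauer's modular representation theory: the irreducible $p$-Brauer characters of $G$ are linearly independent functions on $p$-regular classes, they are $\Z$-linear combinations of reductions of ordinary characters, and their number equals the number of $p$-regular classes. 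The contrapositive then gives: if $c_r^{(p)} \neq d_r^{(p)}$, some irreducible ordinary $\chi$ satisfies $\chi(c)\not\equiv \chi(d) \pmod Q$, as needed.

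Since the harder direction rests on the decomposition matrix theorem together with the existence and enumeration of Brauer characters---results developed at length in any text on modular representation theory---I would not attempt a self-contained argument but instead invoke Isaacs \cite[Lemma~15.2]{isaacs} and Serre \cite[Proposition~30]{Sr}, where precisely the statement of Lemma~\ref{lem:charval} is established.
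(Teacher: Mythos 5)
Your proposal is correct and takes essentially the same route as the paper, which offers no proof of its own and simply defers to \cite[Lem.~15.2]{isaacs} and \cite[Prp.~30]{Sr} --- exactly the two sources you invoke for the substantive content. Your added sketch (the eigenvalue/Frobenius argument showing $\chi(g)\equiv\chi(g_r^{(p)})\bmod Q$ for the easy direction, and the separation of $p$-regular classes by reduced ordinary characters via Brauer theory for the converse) is sound and accurately locates where the real work lies.
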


Lemma~\ref{lem:charval}  is the essence of a scheme-theoretic description of
$C$, given as follows.

\begin{lemma} 
\label{prpggr}
Suppose that $c,d\in\cl G$ are distinct classes of $G$. Then the following hold.
  \begin{itemize}
  \item[(i)] $\primeins {(0)}{c}\neq \primeins {(0)}{d}$. In particular, there is a one-to-one correspondence between the set of irreducible components of $C$ and the set of conjugacy classes
    of $G$.
  \item[(ii)] If $Q,Q'\in\spec{\Z[\xi]}\setminus\{(0)\}$, then
    $\primeins Qc=\primeins{Q'}d$ if and only if $Q=Q'$ and 
    $c^{(p)}_r=d^{(p)}_r$. 
  \end{itemize}
\end{lemma}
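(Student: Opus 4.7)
My plan is to treat the two parts separately, leveraging the integrality and the identification of minimal primes already laid out in the excerpt. For part (i), I would exhibit, for each conjugacy class $c$, an explicit element $\psi_c \in \rbar G$ that ``detects'' precisely $c$. Set $\psi_c := \sum_i \overline{\chi_i(c)}\,\chi_i$. Since $\chi_i(c) \in \Z[\xi]$ and complex conjugation stabilises $\Z[\xi]$ (sending $\xi$ to $\xi^{-1}$), the coefficients $\overline{\chi_i(c)}$ lie in $\Z[\xi]$, so $\psi_c \in \rbar G$. By column orthogonality of the character table, $\psi_c(d) = |C_G(c)|\,\delta_{c,d}$. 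Hence for $c \neq d$ the element $\psi_c$ lies in $\primeins{(0)}{d}$ but not in $\primeins{(0)}{c}$, so these two minimal primes are distinct. Combined with the identification of the minimal primes of $\rbar G$ established just before the lemma, this gives the desired bijection with irreducible components.

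For part (ii), the equality $Q=Q'$ is the easy half: under the inclusion $\Z[\xi] \hookrightarrow \rbar G$ sending $\alpha$ to $\alpha\chi_1$, we have $\primeins{Q}{c} \cap \Z[\xi] = Q$ because $\alpha\chi_1(c)=\alpha$, so if $\primeins{Q}{c}=\primeins{Q'}{d}$, contracting to $\Z[\xi]$ forces $Q=Q'$. Writing $p$ for the characteristic of $\Z[\xi]/Q$, I would then introduce the evaluation-modulo-$Q$ homomorphisms $\operatorname{ev}_c,\operatorname{ev}_d : \rbar G \to \Z[\xi]/Q$ given respectively by $\chi \mapsto \chi(c)+Q$ and $\chi \mapsto \chi(d)+Q$; both are surjective $\Z[\xi]$-algebra maps with kernels $\primeins{Q}{c}$ and $\primeins{Q}{d}$. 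Lemma~\ref{lem:charval} is the bridge between these evaluations and the relation $c_r^{(p)}=d_r^{(p)}$, asserting precisely that $\chi(c) \equiv \chi(d) \pmod Q$ for every character exactly when $c_r^{(p)}=d_r^{(p)}$. By $\Z[\xi]$-linearity this congruence extends from irreducible characters to all of $\rbar G$, so if $c_r^{(p)}=d_r^{(p)}$ then $\operatorname{ev}_c=\operatorname{ev}_d$, whence $\primeins{Q}{c}=\primeins{Q}{d}$.

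The subtlest point I anticipate is the converse in (ii): one needs to argue that two surjective ring maps to a field sharing a kernel actually coincide, rather than agreeing only up to an automorphism of the target. The saving feature is that $\operatorname{ev}_c$ and $\operatorname{ev}_d$ are both $\Z[\xi]$-algebra homomorphisms, each restricting to the canonical quotient $\Z[\xi]\twoheadrightarrow \Z[\xi]/Q$ on the subring $\Z[\xi]\chi_1$, which rigidifies any would-be Frobenius twist on the residue field $\Z[\xi]/Q$. Once $\operatorname{ev}_c=\operatorname{ev}_d$ is secured, we obtain $\chi(c)\equiv\chi(d) \pmod Q$ for every $\chi \in \rbar G$, in particular for every irreducible character, and a final invocation of Lemma~\ref{lem:charval} yields $c_r^{(p)}=d_r^{(p)}$, closing the argument.
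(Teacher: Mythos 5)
Your proposal is correct and, in substance, follows the same route as the paper: the paper's own proof is a one-line reduction of both parts to Lemma~\ref{lem:charval}, and your argument is a fully detailed version of that reduction, with the second-orthogonality element $\psi_c$ supplying the separation of minimal primes needed for (i) (where Lemma~\ref{lem:charval} cannot literally be invoked, since $(0)$ is not maximal) and with the evaluation maps $\operatorname{ev}_c,\operatorname{ev}_d$ feeding the converse of (ii) into Lemma~\ref{lem:charval}. The only stylistic remark is that the ``Frobenius twist'' worry in the converse of (ii) can be bypassed entirely: for each irreducible $\chi_i$ the element $\chi_i-\chi_i(d)\chi_1$ lies in $\primeins{Q}{d}=\primeins{Q}{c}$, so evaluating at $c$ gives $\chi_i(c)\equiv\chi_i(d)\pmod Q$ directly.
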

\begin{proof}
Both statements (i) and (ii) follow from Lemma~\ref{lem:charval}. 
 \end{proof}

Lemma~\ref{prpggr} leads to the following corollary.

\begin{corollary}\label{cor:comps}
  Using the notation of Lemma~\ref{prpggr},
  all irreducible components of $C$ are homeomorphic to
  $\spec{\Z[\xi]}$. Furthermore, for $c\in\cl G$ and $Q\in\spec{\Z[\xi]}$,
  the prime $P=\primeins Qc$ is contained in
  the intersection of two irreducible components of $C$ if and only if
  $Q\neq (0)$ and $c_r^{(p)}=d_r^{(p)}$ for some class $d\in\cl G\setminus\{c\}$. 
\end{corollary}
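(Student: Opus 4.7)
The plan is to identify each irreducible component of $C$ explicitly as a copy of $\spec{\Z[\xi]}$ via a natural quotient of $\rbar G$, and then to use Lemma~\ref{prpggr} to detect when a closed point lies in two components simultaneously. By Lemma~\ref{prpggr}(i), the minimal primes of $\rbar G$, and hence the irreducible components of $C$, are precisely the closed subsets $V(\primeins{(0)}c)$ for $c\in\cl G$, so I would fix such a class $c$ and study the corresponding component directly.

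For each $c$, evaluation at $c$ yields a ring homomorphism $\rbar G\to\Z[\xi]$, $\chi\mapsto \chi(c)$. This is surjective because the trivial character $\chi_1$ satisfies $\chi_1(c)=1$, so the $\Z[\xi]$-multiples of $\chi_1$ already hit every element of $\Z[\xi]$; moreover its kernel is $\primeins{(0)}c$ by the very definition of the latter. Thus $\rbar G/\primeins{(0)}c\cong \Z[\xi]$, and consequently
$$
V(\primeins{(0)}c)=\spec{\rbar G/\primeins{(0)}c}\cong \spec{\Z[\xi]}
$$
as schemes, giving the first assertion. Under this identification, the primes of $V(\primeins{(0)}c)$ are exactly the ideals $\primeins{Q}{c}$ with $Q\in\spec{\Z[\xi]}$.

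For the second assertion, write $P=\primeins Qc$. Then $P$ lies in the intersection of two distinct components iff there exists $d\in\cl G\setminus\{c\}$ and $Q'\in\spec{\Z[\xi]}$ with $P=\primeins{Q}{c}=\primeins{Q'}{d}$. The discussion preceding Lemma~\ref{prpggr} shows that $P\cap\Z[\xi]$ equals $Q$ computed from the first description and $Q'$ from the second, forcing $Q=Q'$. If this common value is $(0)$, then Lemma~\ref{prpggr}(i) forces $c=d$, contradicting our hypothesis; so necessarily $Q\neq(0)$, and Lemma~\ref{prpggr}(ii) yields the equivalent condition $c_r^{(p)}=d_r^{(p)}$, where $p$ is the residue characteristic of $Q$. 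Conversely, given $Q\neq(0)$ and such a class $d$, Lemma~\ref{prpggr}(ii) produces the required second representation of $P$.

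The argument is essentially bookkeeping; the only point that needs real verification is that the parametrization $Q\mapsto\primeins Q c$ exhausts all primes of the component $V(\primeins{(0)}c)$, which is exactly what the isomorphism $\rbar G/\primeins{(0)}c\cong\Z[\xi]$ delivers. Once this is in hand, the intersection question reduces cleanly to the two clauses of Lemma~\ref{prpggr}, so no separate technical obstacle remains.
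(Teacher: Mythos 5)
Your proposal is correct and follows essentially the same route as the paper: the paper also obtains the first claim from the homeomorphism $Q\mapsto\primeins Qc$ between $\spec{\Z[\xi]}$ and the component $C_c$ (which you justify more explicitly via the evaluation isomorphism $\rbar G/\primeins{(0)}c\cong\Z[\xi]$), and deduces the second claim from Lemma~\ref{prpggr}. Your version merely spells out the details that the paper leaves implicit.
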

\begin{proof}
  To prove the first statement, note that the map $Q\mapsto \primeins Qc$ yields a homeomorphism between
  $\spec {\Z[\xi]}$ and the irreducible component $C_c$. The second statement follows from
  Lemma~\ref{prpggr}(ii). 
  \end{proof}

The picture of crossing
irreducible components is sketched on Figure~\ref{fig:sing}.

\begin{figure}
\vspace{0.3cm}
	\begin{center}
	\begin{tikzpicture}[scale=1]
		\draw[-] (0,0) --  (4,0);
		\draw[-] (0,1) --  (4,1);
		\draw[-] (0,1.5) --  (4,2.5);
		\draw[-] (0,2.5) --  (4,1.5);
		\draw[dotted,->] (2,2) -- (2,1);
		\draw[dotted,->] (2,1) -- (2,0);
		\draw[-] (0,3) -- (2,6);
		\draw[-] (2,3) -- (4,6);
		\draw[dotted,->] (1,4.5) -- (2,2);
		\draw[dotted,->] (3,4.5) -- (2,2);
		\node at (5.5,4.5) {$\spc(\Z[\xi]^{\text{Cl}(G)})$ };
		\node at (5.5,2) { $C$ };
		\node at (5.5,1) { $\spc(\Z[\xi])$};
		\node at (5.5,0) { $\spc(\Z)$};
		\node at (2,2) {\raisebox{0ex}{\tiny$\bullet$}};
		\node at (2.17,1.75) {$P$};
		\node at (2,1) {\raisebox{0ex}{\tiny$\bullet$}};
		\node at (2.2,0.79) {$Q$};
		\node at (2,0) {\raisebox{0ex}{\tiny$\bullet$}};
		\node at (2,-0.3) {$p$};
		\node at (1,4.5) {\raisebox{0ex}{\tiny$\bullet$}};
		\node at (0.7,4.7) {$Q_{c}$};
		\node at (3,4.5) {\raisebox{0ex}{\tiny$\bullet$}};
		\node at (3.35,4.3) {$Q_{d}$};
	\end{tikzpicture}
	\end{center}
        \caption{The singularities of $\spec{\rbar G}$}
        \label{fig:sing}
        \end{figure}
Next we  recall the (standard) notions of embedding dimension and regularity at a point in a given algebraic scheme.

\begin{definition}
\label{defted:ed}
Let $X$ be a scheme over $Y$ and $P\in X$ be a point. Let also $\oo_P$ be the local ring of $P$ on $X$ and $\mmp$ its maximal ideal. Set $k_P:=\oo_P/\mmp$ to be the residue field. The (local) \emph{embedding dimension} of $X$ at $P$
is defined as
$$
{\rm edim}_P(X):=\dim_{k_P}(\mmp/\mmp^2).
$$
The point $P$ is said to be \emph{regular} (or \emph{smooth}, or \emph{nonsingular}), if $\oo_P$ is regular, i.e., 
$$
{\rm edim}_P(X)=\dim(\oo_P)
$$
where one refers to the Krull dimension on the right-hand side of the
last equation. The point is said to be \emph{singular} otherwise.
\end{definition}

From the very definition of a singular point and from Corollary~\ref{cor:comps},
we are already in position to derive some first results for the spectra
we are concerned with here.

\begin{prop} 
Let $P=\primeins Qc\in C$ be a point, with $Q\in\spec {\Z[\xi]}$ and $c\in \cl G$. Set also $p:={\rm char}(\Z[\xi]/Q)$, i.e., $(p)$ is the image of $P$ under the morphism $C\to\spec \Z$.
    \begin{itemize} 
\item[(I)] The following are equivalent:
\begin{itemize}
\item[(i)] $P$ is singular;
\item[(ii)] $P$ belongs to two distinct irreducible components
      of $C$;
    \item[(iii)] $Q\neq 0$ and there is a class $d\in\cl G\setminus\{c\}$
      such that $c^{(p)}_r=d^{(p)}_r$.
\end{itemize}
\item[(II)] if $p\nmid |G|$, then  $P$ is non-singular.
\end{itemize}
\end{prop}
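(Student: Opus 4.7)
The plan is to establish (I) by proving the cycle (iii) $\Rightarrow$ (ii) $\Rightarrow$ (i) $\Rightarrow$ (iii), after which (II) will follow from an elementary observation about $p$-regular parts.

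The equivalence (ii) $\iff$ (iii) is already the content of Corollary~\ref{cor:comps}, so nothing further is needed for that half. For (ii) $\Rightarrow$ (i), if $P$ lies on two distinct components $C_c$ and $C_d$, then the two minimal primes $\primeins{(0)}{c}$ and $\primeins{(0)}{d}$ of $\rbar G$ (Lemma~\ref{prpggr}(i)) are distinct and both contained in $P$. They remain distinct after localization, so $\rbar G_P$ has more than one minimal prime and is not a domain, hence cannot be regular.

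The substantive step is the contrapositive of (i) $\Rightarrow$ (ii): if $P$ lies on a unique component, then $\rbar G_P$ is regular. I propose to prove this by identifying $\rbar G_P$ explicitly with a localization of $\Z[\xi]$. The key tool is the evaluation map $\pi_c : \rbar G \to \Z[\xi]$, $\chi \mapsto \chi(c)$, which is surjective since $\chi_1 \equiv 1$ and has kernel $\primeins{(0)}{c}$ by definition. Thus $\pi_c$ induces a ring isomorphism $\rbar G / \primeins{(0)}{c} \cong \Z[\xi]$ carrying $P/\primeins{(0)}{c}$ to $Q$. To transfer this to the localization, I use that $\rbar G$ is reduced because it embeds in the reduced ring $\Z[\xi]^{\cl G}$: the assumption that $P$ contains only one minimal prime forces the nilradical $\primeins{(0)}{c}\rbar G_P$ of $\rbar G_P$ to vanish. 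Therefore $\rbar G_P \cong \Z[\xi]_Q$, which is a DVR by Lemma~\ref{lem:cyclomult}, so $P$ is regular. The degenerate case $Q = (0)$ is immediate, since then $P$ is itself a minimal prime of a reduced ring and $\rbar G_P$ is a field.

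Part~(II) then follows at once: if $p \nmid |G|$, then every element of $G$ has order coprime to $p$, so $g = g_r^{(p)}$ for all $g \in G$ and hence $c_r^{(p)} = c$ for every class $c$. No pair $c \neq d$ can then share $p$-regular parts, so condition (iii) fails and $P$ is non-singular by~(I). The only mildly delicate point in the entire argument is the transition from the quotient isomorphism back to the isomorphism of localizations $\rbar G_P \cong \Z[\xi]_Q$; it hinges on the reducedness of $\rbar G$, but requires no deeper input.
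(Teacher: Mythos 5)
Your proposal is correct and follows essentially the same route as the paper: (ii)$\iff$(iii) is read off from Corollary~\ref{cor:comps}, (ii)$\Rightarrow$(i) uses the two distinct minimal primes to show $\rbar G_P$ is not a domain, and (i)$\Rightarrow$(ii) is proved contrapositively via the smoothness of $\spec{\Z[\xi]}$ from Lemma~\ref{lem:cyclomult}. The only difference is that you spell out the step the paper leaves implicit, namely why a point on a unique component inherits regularity from that component: your identification $\rbar G_P\cong\Z[\xi]_Q$ via the evaluation map and the reducedness of $\rbar G$ is a correct and welcome justification of that step.
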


\begin{proof}
Let us first show that (i) implies (ii). Any component of $C$ is isomorphic to $\spec {\Z[\xi]}$ owing to
    Corollary~\ref{cor:comps}. On the other hand, $\spec {\Z[\xi]}$ is smooth, by Lemma~\ref{lem:cyclomult}, and so if $P$ belongs to a unique component of $C$, then it is regular.
  
    Conversely, if $P$ lies in two distinct components of $C$, then
  $P$ contains two distinct minimal primes of $\rbar G$. Therefore
  the unique maximal ideal of the local ring  $\op=\rbar G_{P}$
    also contains two distinct minimal primes, which implies that
    $\op$ is not a domain. Thus $\op$ is not
    a regular local ring, which implies that $P$ is singular.

    The equivalence between (ii) and (iii) follows directly from
    Corollary~\ref{cor:comps}, while (II) is immediate from the equivalence between (i) and (iii).
\end{proof}


In the next result, we study the  (intrinsic) notion of tangency bringing  a different perspective into the study of singularities. Recall that the Zariski tangent space was defined in Definition~\ref{defted}.
This notion of tangent space should  be compared against the concept of regularity introduced  in Definition~\ref{defted:ed}. For instance, in the case of schemes over fields, the embedding dimension agrees with the dimension of the tangent, whichever the point is. On the other hand, these two invariants may differ over $\Z$. At any rate, one can easily derive the following first relations below from basic algebraic geometry.

\begin{prop}
\label{thmgr1}
Let $\pi:C\rightarrow\spc(\Z)$ be the natural projection and let $P\in C$ be a point with $\pi(P)=(p)$. Then the following hold:
\begin{itemize}
\item[(i)] $\dim_{k_P}(T_P(C/\Z))\leq \edim_P(C)$;
\item[(ii)] if $p\not\in P^2$, then $\dim_{k_P}(T_P(C/\Z))<\edim_P(C)$;
\item[(iii)] if there exists an $x\in R(G)$ with $x\not\equiv 0\,({\rm mod}\, p)$ and $x^r\equiv 0\,({\rm mod}\, p)$ with some $r\geq 2$, then  $\dim_{k_P}(T_P(C/\Z))\geq 1$ for some $P\in \pi^{-1}(p)$.
\item[(vi)] $\edim_P(C)\leq {\rm mg}(P)$, where ${\rm mg}(P)$  is
  the minimal number of generators of $P$;
\end{itemize}
\end{prop}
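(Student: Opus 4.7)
The plan rests on the right-exact conormal sequence associated to the quotient $\mathcal{O}_P \twoheadrightarrow k_P$,
$$
\mathfrak{m}_P/\mathfrak{m}_P^2 \xrightarrow{\ d\ } \Omega_{\mathcal{O}_P/\Z} \otimes_{\mathcal{O}_P} k_P \longrightarrow \Omega_{k_P/\Z} \longrightarrow 0,
$$
in which the last term vanishes because $k_P$ is a finite, and hence separable, extension of $\mathbb{F}_p$. Dualizing identifies $T_P(C/\Z)$ with the $k_P$-dual of the image of $d$, so
$$
\dim_{k_P} T_P(C/\Z) = \edim_P(C) - \dim_{k_P}\ker d.
$$
Also, $P$ is automatically maximal in $\rbar{G}$: since $\rbar{G}$ is finite over $\Z$, the projection $\pi\colon C \to \spec{\Z}$ is finite and $P$ lies in the zero-dimensional fiber over the closed point $(p)$.

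Part (i) is immediate, as $\dim_{k_P}\ker d \geq 0$. For (ii), observe that $p \in \mathfrak{m}_P$ always lies in $\ker d$ because $dp = 0$ in $\Omega_{\mathcal{O}_P/\Z}$ (the element $p$ comes from $\Z$). Thus whenever the class of $p$ in $\mathfrak{m}_P/\mathfrak{m}_P^2$ is non-zero, $\dim_{k_P}\ker d \geq 1$ and strict inequality follows. The implication $p \notin P^2 \Rightarrow p \notin \mathfrak{m}_P^2$ is the one delicate step: since $P$ is maximal, the canonical map $\rbar{G}/P^n \to \mathcal{O}_P/\mathfrak{m}_P^n$ is an isomorphism for every $n$ (the left-hand side is already local at $P$), so $P^n = \mathfrak{m}_P^n \cap \rbar{G}$. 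For (vi), Nakayama's lemma identifies $\edim_P(C)$ with the minimal number of generators of $\mathfrak{m}_P$ as an ideal of $\mathcal{O}_P$; any generating set of $P$ in $\rbar{G}$ localizes to a generating set of $\mathfrak{m}_P = P\mathcal{O}_P$, giving $\edim_P(C) \leq \mathrm{mg}(P)$.

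For (iii), we pass to the scheme-theoretic fiber $C_p = \spec{\rbar{G}/p\rbar{G}}$, noting that base change of Kähler differentials yields $T_P(C/\Z) = T_P(C_p/\mathbb{F}_p)$. Since $\rbar{G}$ is finite over $\Z$, the ring $\rbar{G}/p\rbar{G}$ is a finite-dimensional $\mathbb{F}_p$-algebra, hence Artinian, and decomposes as a product $\prod_i A_i$ of local rings with $A_i = \mathcal{O}_{P_i}/p\mathcal{O}_{P_i}$. Because $\Z[\xi]$ is free over $\Z$, the natural map $R(G)/pR(G) \to \rbar{G}/p\rbar{G}$ is injective, so the hypothesis supplies a non-zero nilpotent in $\rbar{G}/p\rbar{G}$, forcing some $A_i$ to be non-reduced. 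Such an $A_i$ is a Noetherian local ring that is not a field, so its maximal ideal $\bar{\mathfrak{m}}$ is non-zero; by Nakayama, $\bar{\mathfrak{m}}/\bar{\mathfrak{m}}^2 \neq 0$, and the conormal sequence applied to $A_i$ (whose residue field $k_{P_i}$ is separable over $\mathbb{F}_p$) identifies this vector space with $T_{P_i}(C_p/\mathbb{F}_p)$, yielding $\dim T_{P_i}(C/\Z) \geq 1$. The main obstacle throughout is the step in (ii) equating $p \notin P^2$ with $p \notin \mathfrak{m}_P^2$, which genuinely uses the maximality of $P$; the rest is routine commutative algebra.
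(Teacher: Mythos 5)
Your proof is correct. For parts (i), (ii) and (vi) it follows essentially the paper's own route: the conormal sequence for $\mathcal{O}_P\to k_P$, the vanishing of $\Omega_{k_P/\Z}$ via separability of the finite residue field over $\F_p$, the observation that $dp=0$ kills the class of $p$, and Nakayama for the generator count. You are in fact more careful than the paper on one point: the paper silently identifies $P/P^2$ with $\mathfrak{m}_P/\mathfrak{m}_P^2$ in (ii), whereas you justify the implication $p\notin P^2\Rightarrow p\notin\mathfrak{m}_P^2$ via the maximality of $P$, which you correctly derive from the integrality of $\rbar G$ over $\Z$. Part (iii) is where you genuinely diverge: the paper deduces from the nilpotent that the fiber over $p$ cannot be a disjoint union of spectra of finite separable field extensions and then cites Matsumura's characterizations of unramified morphisms to get $\Omega_{C_p/\F_p}\neq 0$; you instead decompose the Artinian ring $\rbar G/p\rbar G$ into local factors, locate a non-reduced factor $A_i$, and read off $T_{P_i}\neq 0$ directly, which is self-contained and arguably cleaner. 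The one step there that deserves an extra sentence is the assertion that the conormal map $\bar{\mathfrak{m}}/\bar{\mathfrak{m}}^2\to\Omega_{A_i/\F_p}\otimes_{A_i} k_{P_i}$ is an isomorphism rather than merely a surjection: surjectivity alone bounds the tangent space from above, which is the wrong direction for your purpose. Injectivity does hold here, but it uses separability of $k_{P_i}/\F_p$ in an essential way --- for instance, lift $k_{P_i}$ to a coefficient field of the (complete, because Artinian) local ring $A_i$ by Hensel's lemma, so that $\Omega_{A_i/\F_p}\otimes k_{P_i}\cong\Omega_{A_i/k_{P_i}}\otimes k_{P_i}\cong\bar{\mathfrak{m}}/\bar{\mathfrak{m}}^2$. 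With that justification supplied, your argument is complete and yields exactly the statement proved in the paper.
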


\begin{proof}
Set $R=\rbar G$.  To prove (i), note that $\mathcal{O}_P=R_P$ is a $\Z$-algebra, so we have the exact sequence 
\begin{equation}
\label{equses}
\mathfrak{m}_P/\mathfrak{m}_P^2\stackrel{d}\longrightarrow \Omega_{\mathcal{O}_P/\Z}\otimes_{\mathcal{O}_P}k_P\longrightarrow \Omega_{k_P/\Z}\longrightarrow 0
\end{equation}
of $k_P$-vector spaces
where ``$d$" is the derivative map. Further, the natural sequence of ring homomorphisms
$$
\Z\longrightarrow\mathbb{F}_p\longrightarrow k_P
$$
yields the following exact sequence of $k_P$-vector spaces:
\begin{equation}
\label{equfes}
\Omega_{\mathbb{F}_p/\Z}\otimes_{\mathbb{F}_p}k_P\longrightarrow \Omega_{k_P/\Z}\longrightarrow \Omega_{k_P/\mathbb{F}_p}.
\end{equation}
As $P$ is a maximal ideal of $R$, we have that $k_P\cong R/P$, thus $k_P$ is a finitely generated algebra over $\mathbb{F}_p$. Besides,  $k_P$ is a finite field
(equation~\ref{eq:kp}) and is separable over $\F_p$; therefore $\Omega_{k_P/\mathbb{F}_p}=0$. From the epimorphism $\Z\to\mathbb{F}_p$ we also get that $\Omega_{\mathbb{F}_p/\Z}=0$, hence, from (\ref{equfes}), we deduce that $\Omega_{k_P/\Z}=0$, and, from (\ref{equses}), we obtain the surjective linear map
\begin{equation}
\label{equsrj}
\mathfrak{m}_P/\mathfrak{m}_P^2\twoheadrightarrow \Omega_{\mathcal{O}_P/\Z}\otimes_{\mathcal{O}_P}k_P.
\end{equation}
Now $\Omega_{C/\spc(\Z)}\otimes_{C}k_P:=(\Omega_{C/\spc(\Z)}\otimes_{\oo_C}k_P)_P$ which is naturally isomorphic to
\[
(\Omega_{R/\Z})_P\otimes_{R_P}k_P\cong\Omega_{R_P/\Z}\otimes_{R_P}k_P=\Omega_{\oo_P/\Z}\otimes_{\oo_P}k_P.\] 
Thus, by Definition \ref{defted},
\begin{equation}
\label{equdid}
\dim_{k_P}(T_P(C/\Z))=\dim_{k_P}(\Omega_{\oo_P/\Z}\otimes_{\oo_P}k_P)
\end{equation}
and (i) follows from (\ref{equsrj}).

To prove (ii), note that if $p\not\in P^2$ then its class is nonzero in $\mmp/\mmp^2$. But the derivative map $d$ defined  in (\ref{equses}) vanishes for all elements of the image of $P\cap\Z$ in $\mmp/\mmp^2$.  In particular $d(p)=0$, but since $p$ is nonzero, we have that $\ker(d)\neq 0$. Since $d$ is surjective by (\ref{equsrj}), it follows that $\dim_{k_P}(T_P(C/\Z))<\edim_P(C)$ by (\ref{equdid}).

To prove (iii), take $x\in R(G)$ with $x\not\equiv 0$ {\rm mod} $p$ such that $x^r\equiv 0$  mod $p$ for some $r\geq 2$. Then $(1\otimes x)\otimes 1$ is a nilpotent element in $A:=(\Z[\xi]\otimes R(G))\otimes\mathbb{F}_p$. Therefore the fiber $C_{p}:=\spc\, A$ of the morphism $\pi:C\to\spc(\Z)$ over $p$ cannot be a disjoint union of the form $\bigsqcup_i k_i$ where the $k_i$ are finite separable field extensions of $\mathbb{F}_p$. From \cite[Prp.~3.2]{M} it follows that $\pi$ ramifies at $p$, and from \cite[Prp.~3.5]{M} it follows that $\Omega_{C_p/\spc(\F_p)}\not = 0$. So $\dim_{k_P}(T_{P}(C/\Z))\geq 1$ for some $P\in\pi^{-1}(p)$.

To prove (iv), take $P=(f_1,\ldots,f_m)$. Given $f\in P$ write $f=a_1f_1+\ldots+a_mf_m$ with $a_i\in R$ and taking congruence mod $P^2$ we have $
\overline{f}=\overline{a_1} \cdot \overline{f_1} + \ldots + \overline{a_m} \cdot \overline{f_m}
$	
where $\overline{a_i}\in R/P=k_P$, and $\overline{f_i}\in P/P^2$, so the $\overline{f_i}$ generate $P/P^2$. Hence
$$
\edim_P(X)=\dim(\mathfrak{m}_P/\mathfrak{m}_P^2)=\dim(P/P^2)\leqslant m
$$
and the inequality follows choosing $m$ as the minimal number of generators.
\end{proof}

\section{Computing singularities}\label{sec:comp}

In this section, we present a practical method to compute the
embedding dimension and the dimension of the Zariski tangent space at a point $P$ on the affine scheme $C=\spec{\rbar G}$ for a finite group $G$.
We also prove Theorem~\ref{th:edim}.


Given a point $P=\primeins Qc\in C$ with $Q\neq(0)$, recall from
Section~\ref{sec:cyclo} that $Q=(p,f(\xi))$ where $p\in\Z$ is a prime and $f(x)\in\Z[x]$ is an
irreducuble factor of $\Phi_n(x)$ in $\F_p[x]$. 
Let $(Q)$ denote the ideal
in $\rbar G$ generated by $Q$. As $k_P\cong \Z[\xi]/Q$ (see equation~\eqref{eq:kp}), we may consider $\rbar G/(Q)$ as a $k_P$-algebra
under the well-defined action $(r+Q)(x+(Q))=rx+(Q)$ for all $r\in\Z[\xi]$
and $x\in \rbar G$. Note that
$$
\rbar G/(Q)\cong \rbar G\otimes_{\Z[\xi]}(\Z[\xi]/Q)\cong
\rbar G\otimes_{\Z[\xi]}k_P.
$$
So set
$$
\rtilde G:= \rbar G/(Q)
$$
and note that, as $Q\subseteq P$, we have that
$$
\widetilde P:=P/(Q)=P\otimes_{\Z[\xi]}k_P
$$
is a prime ideal of $\rtilde G$.

With this in mind, we give a local description of a point of $C$ in a way that its embedding dimension can be easily computed once we study tangent spaces.

\begin{lemma}
\label{lem:primedesc}
 Let $P=\primeins Qc$ be a point of $C$ with $Q=(p,f(\xi))$. Then the following hold:
  \begin{itemize}
  \item[(i)] $P=(p,f(\xi),\chi_2-\chi_2(c),\ldots,\chi_s-\chi_s(c))$;
  \item[(ii)] $k_P\cong\rtilde G/\widetilde P $;
  \item[(iii)] $\edim_P(C)=\dim_{k_P}\widetilde P/\widetilde P^2+1$.
\end{itemize}
\end{lemma}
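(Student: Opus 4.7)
The plan is direct. For any $\chi\in P=\primeins Qc$, expand in the character basis as $\chi=\sum_{i=1}^s a_i\chi_i$ with $a_i\in\Z[\xi]$. Since $\chi_1$ is trivial, $\chi_1-\chi_1(c)=0$, so one may rewrite
\[
\chi=\chi(c)\,\chi_1+\sum_{i=2}^s a_i(\chi_i-\chi_i(c)).
\]
The defining condition $\chi(c)\in Q=(p,f(\xi))$ places the first summand in $(p,f(\xi))$, and the second is manifestly in the ideal generated by $\chi_2-\chi_2(c),\ldots,\chi_s-\chi_s(c)$. The reverse inclusion is immediate: every listed generator sends $c$ into $Q$, so lies in $P$.

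\textbf{Part (ii).} Since $(Q)\subseteq P$, the third isomorphism theorem and~\eqref{eq:kp} give $\rtilde G/\widetilde P=(\rbar G/(Q))/(P/(Q))\cong \rbar G/P\cong k_P$.

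\textbf{Part (iii).} This is the substantive step. Since $P$ is maximal, $\edim_P(C)=\dim_{k_P} P/P^2$. The plan is to exploit the short exact sequence of $k_P$-vector spaces
\[
0\longrightarrow \big((Q)+P^2\big)/P^2\longrightarrow P/P^2\longrightarrow P/\big((Q)+P^2\big)\longrightarrow 0,
\]
whose rightmost term is naturally isomorphic to $\widetilde P/\widetilde P^2$ (again by the third isomorphism theorem, as $\widetilde P^2=(P^2+(Q))/(Q)$). So the whole claim reduces to proving that $\big((Q)+P^2\big)/P^2$ has $k_P$-dimension exactly one, which I would establish by a two-sided bound. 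For the upper bound, $(Q)=Q\rbar G$ is generated by $p$ and $f(\xi)$, and Lemma~\ref{lem:cyclomult} provides, depending on whether condition~\eqref{eq:divisibility} holds, either $p\in Q^2$ or $f(\xi)\equiv \alpha p\pmod{Q^2}$ for some $\alpha\in\Z[\xi]$; since $Q^2\subseteq (Q)^2\subseteq P^2$, the same congruence holds modulo $P^2$, so the image of $(Q)$ in $P/P^2$ is generated by a single class---the image of $f(\xi)$ in the first case, of $p$ in the second. For the lower bound, consider the evaluation map $\mathrm{ev}_c:\rbar G\to\Z[\xi]$, $\chi\mapsto\chi(c)$, which is a surjective ring homomorphism (since $\alpha\chi_1\mapsto\alpha$) sending $P$ onto $Q$ and $(Q)$ onto $Q$; it induces a surjective $k_P$-linear map $P/P^2\twoheadrightarrow Q/Q^2$ under which $\big((Q)+P^2\big)/P^2$ maps onto $Q/Q^2$, which is one-dimensional by Lemma~\ref{lem:cyclomult}. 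Combining the bounds gives the desired equality.

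\textbf{Main obstacle.} The delicate part is (iii): the statement that the image of $(Q)$ in $P/P^2$ is one-dimensional needs both the cyclotomic input from Lemma~\ref{lem:cyclomult} (upper bound, with a careful transfer of relations from $\Z[\xi]$ to $\rbar G$ via $Q^2\subseteq(Q)^2\subseteq P^2$) and the evaluation map $\mathrm{ev}_c$ (lower bound). Once these ingredients are in place, the combinatorics of characters plays essentially no role, and the rest is bookkeeping.
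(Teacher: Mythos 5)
Your proposal is correct and follows essentially the same route as the paper: part (iii) is reduced in both cases to showing that $\dim_{k_P}\bigl((Q)+P^2\bigr)/P^2=1$, with the upper bound coming from $\dim_{k_P}Q/Q^2=1$ (Lemma~\ref{lem:cyclomult}) and the lower bound from evaluation at $c$ (the paper phrases this as a contradiction using the ideal $J=\{\chi\mid\chi(c)\in Q^2\}$, which is exactly your $\mathrm{ev}_c^{-1}(Q^2)$). The only cosmetic difference is in part (i), where you verify the generating set by direct expansion in the character basis while the paper instead shows the candidate ideal is maximal and contained in $P$; both are valid.
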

\begin{proof}
 For short, set $R:=\rbar G$ and let $Y:=(p,f(\xi),\chi_2-\chi_2(c),\ldots,\chi_s-\chi_s(c))$ be the ideal of $R$ we want $P$ to agree with.   Clearly $Y\subseteq P$.  As $Q\subseteq Y$, we obtain
  $$
  R/Y\cong (R/(Q))/(Y/(Q)).
  $$
  Furthermore, $(R/(Q))/(Y/(Q))$ is isomorphic to $\Z[\xi]/Q$ under the
  evaluation map
  $
  \chi+(Q)\mapsto \chi(c)+Q$.
  Therefore $R/Y$ is isomorphic to the field $\Z[\xi]/Q$.  Thus $Y$ is
  maximal and, as $Y\subseteq P$, we must have $Y=P$. This proves simultaneously~(i) and~(ii).

 Let us prove statement~(iii). Note that
  $$
  \widetilde P/\widetilde P^2=(P/(Q))/(P^2+(Q))/(Q)\cong P/(P^2+(Q)).
  $$
  Thus we obtain a chain of ideals $P\supseteq P^2+(Q)\supseteq P^2$ and
  we first prove that $\dim_{k_P}(P^2+(Q))/P^2\leq 1$. Indeed,
  by Lemma~\ref{lem:cyclomult}, $\dim_{k_P} Q/Q^2=1$. On the other hand, the map
  $q+Q^2\mapsto q+P^2$ is clearly a surjective $k_P$-homomorphism between
  $Q/Q^2\rightarrow ((Q)+P^2)/P^2$ and so $\dim_{k_P} ((Q)+P^2)/P^2\leq \dim_{k_P}
  Q/Q^2=1$.
  Now assume by seeking a contradiction that
  $\dim_{k_P}((Q)+P^2)/P^2=0$. This is equivalent to $(Q)+P^2=P^2$; that is
  $Q\subseteq P^2$. Set
  $$
  J=\{\chi\in R\mid \chi(c)\in Q^2\}.
  $$
  Then $J$ is an ideal of $R$ such that $P^2\subseteq J$. However, this
  implies that $Q\subseteq J$, and so $p=p\chi_1(c)\in Q^2$ and
  $f(\xi)=f(\xi)\chi_1(c)\in Q^2$. Thus $Q=Q^2$, which is a contradiction.
\end{proof}


Note that the ring $\rbar G$ is generated by $\chi_2,\ldots,\chi_s$ as a $\Z[\xi]$-module. So one is able to write
$$
\rbar G=\Z[\xi][x_2,\ldots,x_s]/I_G
$$
where $I_G$ is an ideal generated by some polynomials
$f_1,\ldots,f_r\in\Z[\xi][x_2,\ldots,x_s]$. The polynomials $f_i$ can be
obtained by calculating, for all $i,j$ and $k$, the coefficients
$\alpha_{i,j}^k\in\Z[\xi]$
that satisfy
\begin{equation}\label{eq:chi}
\chi_i\chi_j=\sum_{k=1}^s\alpha_{i,j}^k\chi_k.
\end{equation}
Note that $\alpha_{i,j}^k\in\Z$ for all $i,\ j,\ k$, and so
$f_i\in\Z[x_2,\ldots,x_s]$ for all $i\in\{1,\ldots,r\}$.

As above, for a fixed point $P\in\spec{\rbar G}$, consider the ring $\rtilde G$. Also recall from
Section~\ref{sec:cyclo}, that $Q=(p,f(\xi))$ and that $k_P\cong \Z[\xi]/Q$. Thus we get
$$
\rtilde G=k_P[x_2,\ldots,x_s]/(\widehat f_1,\ldots,\widehat f_r)
$$
where $\widehat f_i\in k_P[x_1,\ldots,x_s]$ is the polynomial obtained
from $f_i$ by reducing the coefficients modulo $Q$. Furthermore, as $f_i\in\Z[x_2,\ldots,x_s]$,  $\widehat f_i\in\F_p[x_2,\ldots,x_s]$.

By Lemma~\ref{lem:primedesc}, 
$$
P=(p,f(\xi),\chi_2-\chi_2(c),\ldots,\chi_s-\chi_s(c)).
$$
Setting, for $i=\{2,\ldots,s\}$, $\alpha_i=\chi_i(c)+Q$, we find that
$\alpha_i\in k_P$.
Now note that $\widetilde P=P/(Q)$ is the closed point in $\spec{\rtilde G}$ corresponding to $(\alpha_2,\ldots,\alpha_s)\in k_P^{s-1}$.
Set $\alpha=(\alpha_2,\ldots,\alpha_s)$ and 
let us define the {\em Jacobian matrix} $J_{G,P}$ and the {\em extended Jacobian matrix} $\overline J_{G,P}$ for $G$ at the point $P$ as follows:   
$$
J_{G,P}=\begin{pmatrix}
\displaystyle\frac{\partial \widehat f_1}{\partial x_2}(\alpha)
  & \cdots &
\displaystyle\frac{\partial \widehat f_1}{\partial x_s}(\alpha) \\
\vdots & \ddots & \vdots  \\
\displaystyle\frac{\partial \widehat f_r}{\partial x_2}(\alpha) &
  \cdots & \displaystyle\frac{\partial \widehat f_r}{\partial x_s}(\alpha) 
\end{pmatrix}
$$
and 
$$
\overline J_{G,P}=\begin{pmatrix}
\displaystyle\frac{\partial \widehat f_1}{\partial x_2}(\alpha)
  & \cdots &
\displaystyle\frac{\partial \widehat f_1}{\partial x_s}(\alpha) & 0 \\
\vdots & \ddots & \vdots & \vdots \\
\displaystyle\frac{\partial \widehat f_r}{\partial x_2}(\alpha) &
  \cdots & \displaystyle\frac{\partial \widehat f_r}{\partial x_s}(\alpha) & 0\\
  0 & \cdots & 0 & \widehat\Phi_n'(\xi) 
\end{pmatrix}.
$$
The matrices $J_{G,P}$ and $\overline J_{G,P}$ can be viewed as the matrices of linear transformations
$k_p^{s-1}\rightarrow k_p^{r}$ and $k_P^s\rightarrow k_P^{r+1}$, respectively, whose kernels will be  referred to as the kernel of $J_{G,P}$ and the kernel of $\overline J_{G,P}$.

The matrix $\overline J_{G,P}$ contains $J_{G,P}$ and an additional row and column in which all entries are zero except perhaps the corner entry which is $\widehat\Phi_n'(\xi)$. Noting that $\Phi_n(\xi)=0$ Lemma~\ref{lem:cyclomult} gives sufficient and necessary conditions as to when 
$\widehat \Phi_n'(\xi)=0$. Furthermore, by Theorem~\ref{th:jac}, the Jacobian matricies $J_{G,P}$ and $\overline J_{G,P}$ express the dimensions of the Zariski tangent spaces
$T_P(C/\Z[\xi])$ and $T_P(C/\Z)$, respectively. Hence the following lemma is valid.

\begin{lemma}\label{lem:tangent}
Letting $J_{G,P}$ and $\overline J_{G,P}$ be as above, the following hold.
\begin{enumerate}
    \item[(i)] $\dim_{k_P} T_P(C/\Z[\xi])=\dim_{k_P}\ker J_{G,P}$;
    \item[(ii)]  $\dim_{k_P} T_P(C/\Z)=\dim_{k_P}\ker \overline J_{G,P}$;
    \item[(iii)] $\dim_{k_P}T_P(C/\Z)=\dim_{k_P}T_P(C/\Z[\xi])$ if and only if
      either $p\nmid n$ or $p=2$ and $4\nmid n$; otherwise
    $\dim_{k_P}T_P(C/\Z)=\dim_{k_P}T_P(C/\Z[\xi])+1$.
\end{enumerate}
\end{lemma}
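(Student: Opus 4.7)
The plan is to deduce the three claims by invoking Theorem~\ref{th:jac} for two distinct presentations of $\rbar G$ as a quotient of a polynomial ring, once over $\Z[\xi]$ and once over $\Z$, and then to compare the two resulting Jacobians using Lemma~\ref{lem:cyclomult}.

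For (i), I feed the presentation $\rbar G=\Z[\xi][x_2,\ldots,x_s]/I_G$ already introduced before the lemma into Theorem~\ref{th:jac} with $A=\Z[\xi]$, using the evaluation point $\alpha=(\alpha_2,\ldots,\alpha_s)\in k_P^{s-1}$ with $\alpha_i=\chi_i(c)+Q$. The matrix emitted by the theorem is exactly $J_{G,P}$, giving
$$
\dim_{k_P} T_P(C/\Z[\xi])=(s-1)-\mathrm{rank}\,J_{G,P}=\dim_{k_P}\ker J_{G,P}.
$$

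For (ii), I introduce an additional indeterminate $x_1$ standing for $\xi$ and use the presentation
$$
\rbar G=\Z[x_1,x_2,\ldots,x_s]/(\Phi_n(x_1),f_1,\ldots,f_r),
$$
which is valid because $\Z[\xi]\cong\Z[x_1]/(\Phi_n(x_1))$ and the generators $f_i$ already lie in $\Z[x_2,\ldots,x_s]$. Applying Theorem~\ref{th:jac} with $A=\Z$ and evaluation point $(\xi+Q,\alpha_2,\ldots,\alpha_s)\in k_P^s$ produces a Jacobian built from the partials of $f_1,\ldots,f_r,\Phi_n(x_1)$. The key observation is that $\partial f_i/\partial x_1=0$, since $f_i$ does not involve $x_1$, while $\partial \Phi_n(x_1)/\partial x_j=0$ for $j\geq 2$. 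Hence, up to reordering rows and columns, this matrix is block-diagonal with blocks $J_{G,P}$ and the $1\times 1$ block $\widehat\Phi_n'(\xi)$, so its rank equals $\mathrm{rank}\,\overline J_{G,P}$. Theorem~\ref{th:jac} then delivers $\dim_{k_P} T_P(C/\Z)=s-\mathrm{rank}\,\overline J_{G,P}=\dim_{k_P}\ker\overline J_{G,P}$. Part (iii) drops out immediately: since $\overline J_{G,P}$ is the direct sum of $J_{G,P}$ and the scalar $\widehat\Phi_n'(\xi)$, its rank exceeds $\mathrm{rank}\,J_{G,P}$ by exactly $1$ if $\widehat\Phi_n'(\xi)\neq 0$ and by $0$ otherwise. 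Combined with (i) and (ii), this shows that the two tangent dimensions agree precisely when $\widehat\Phi_n'(\xi)\neq 0$, and Lemma~\ref{lem:cyclomult} identifies this non-vanishing with the failure of~\eqref{eq:divisibility}, which is the stated condition.

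The step I expect to require most care is the verification that the relation ideal for $\rbar G$ inside $\Z[x_1,\ldots,x_s]$ is precisely generated by $\Phi_n(x_1)$ together with the $f_i$, since Theorem~\ref{th:jac} is applied with this exact list of generators and the rank computation would be meaningless otherwise. This should follow cleanly from the tensor-product identity $\rbar G=R(G)\otimes_\Z\Z[\xi]$ together with the fact that the $f_i$ already present $R(G)$ as a quotient of $\Z[x_2,\ldots,x_s]$, but it is worth being explicit about before invoking Theorem~\ref{th:jac}.
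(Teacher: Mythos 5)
Your proposal is correct and follows essentially the same route the paper takes: the paper justifies the lemma by applying Theorem~\ref{th:jac} to the presentation of $\rbar G$ over $\Z[\xi]$ for part (i) and to the presentation over $\Z$ (with the extra generator for $\xi$ and the extra relation $\Phi_n$) for part (ii), and then reads off part (iii) from the block structure of $\overline J_{G,P}$ together with the criterion for $\widehat\Phi_n'(\xi)=0$ in Lemma~\ref{lem:cyclomult}. Your explicit verification that the relation ideal over $\Z$ is generated by $\Phi_n(x_1)$ and the $f_i$ is a welcome detail that the paper leaves implicit.
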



We are in position now to prove Theorem~\ref{th:edim}, which is a
powerful tool for computing dimensions of embedding, tangency and for relating
these concepts.

\begin{proof}[The proof of Theorem~\ref{th:edim}]
Set $\tilde P=P/(Q)=P\otimes_{\Z[\xi]} k_P$ as above. By Theorem~\ref{th:jac}, 
\begin{equation}
\label{equeqs}
\dim_{k_P}(\tilde P/\tilde P^2)=\dim_{k_P} T_P(C/\Z[\xi])=\dim_{k_P}\ker J_{G,P}.
\end{equation}
Hence we obtain that
$$
\dim_{k_P}T_P(C/\Z[\xi])\leq \dim_{k_P} T_P(C/\Z)\leq\edim_P(C)=\dim_{k_P} T_P(C/\Z[\xi)+1.
$$
  Indeed the first inequality is immediate from Lemma~\ref{lem:tangent}(iii),
  the second follows from Proposition~\ref{thmgr1}(i), and the equality follows from  Lemma~\ref{lem:primedesc}(iii) and (\ref{equeqs}). Note that one of the
  inequalities in the last displayed line must be an equality.

  If case (i) of Theorem~\ref{th:edim} holds, then 
  $\dim_{k_P}T_P(C/\Z)=\dim_{k_P}T_P(C/\Z[\xi])+1$, by Lemma~\ref{lem:tangent}(iii). Hence
        $$
      \dim_{k_P}T_P(C/\Z[\xi])+1=\dim_{k_P}T_P(C/\Z)= \edim_P(C).
      $$
      Now, if case~(ii) is valid, then $\dim_{k_P}T_P(C/\Z[\xi])= \dim_{k_P} T_P(C/\Z)$
      and so
      $$
      \dim_{k_P}T_P(C/\Z[\xi])=\dim_{k_P}T_P(C/\Z)= \edim_P(C)-1.
      $$
      
      Finally, if case~(i) holds, then $p\in Q^2$,
      by Lemma~\ref{lem:cyclomult}, and so $p\in P^2$.
      Assume that
      case~(ii) holds. Then, $Q/Q^2$ is generated by
      $p+Q^2$ (Lemma~\ref{lem:cyclomult}). Hence, if $p\in P^2$, then $(Q)+P^2=P^2$, which is impossible,
      by the argument given in the proof of Lemma~\ref{lem:primedesc}(iii).
\end{proof}

\begin{corollary}
\label{corabl}
  Suppose that $G$ is a finite  abelian group, let $C=\spec{\rbar G}$ and let
  $P\in C$ such that $p\in P\cap \Z$ for some prime $p$. Then
  $$
  \dim_{k_P} T_P(C/\Z[\xi])=\log_p(G/G^p).
  $$
\end{corollary}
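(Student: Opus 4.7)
The plan is to reduce the computation to the well-known structure of the augmentation ideal of an abelian group ring. Since $G$ is abelian, its irreducible characters form a group $\hat G \cong G$ under multiplication, so $R(G) \cong \Z[\hat G]$ and $\rbar G \cong \Z[\xi][\hat G]$. For $P = \primeins Qc$, the quotient $\rtilde G = \rbar G/(Q)$ is just the group algebra $k_P[\hat G]$, and $\widetilde P = P/(Q)$ is the kernel of the evaluation homomorphism $\epsilon_c\colon k_P[\hat G] \to k_P$ sending $\chi$ to $\overline{\chi(c)}$. Theorem~\ref{th:edim} (both cases of which yield $\dim_{k_P} T_P(C/\Z[\xi]) = \edim_P(C) - 1$) together with Lemma~\ref{lem:primedesc}(iii) then reduce the claim to showing that $\dim_{k_P} \widetilde P/\widetilde P^2 = \log_p(G/G^p)$.

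The central step is to shift the evaluation point from $c$ to the identity by introducing the $k_P$-algebra automorphism $\tau$ of $k_P[\hat G]$ defined on group elements by $\tau(\chi) = \overline{\chi(c)}\,\chi$. Multiplicativity of $\chi \mapsto \chi(c)$ makes $\tau$ a ring homomorphism, and it is invertible because $\overline{\chi(c)} \in k_P^\ast$. A short computation shows that $\epsilon_1 \circ \tau = \epsilon_c$, so $\tau$ carries $\widetilde P$ onto the augmentation ideal $I = \ker \epsilon_1$ and hence induces an isomorphism $\widetilde P/\widetilde P^2 \cong I/I^2$ of $k_P$-vector spaces.

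I would then apply the classical presentation $I/I^2 \cong \hat G \otimes_\Z k_P$, which comes from the group homomorphism $\hat G \to I/I^2$, $\chi \mapsto (\chi - 1) + I^2$; this is well-defined because $\chi_1\chi_2 - 1 \equiv (\chi_1 - 1) + (\chi_2 - 1) \pmod{I^2}$, the discrepancy being $(\chi_1 - 1)(\chi_2 - 1) \in I^2$. Using $\hat G \cong G$ and decomposing $G = G_p \times G_{p'}$, the $p'$-primary part is killed upon tensoring with $k_P$ (every prime $q \neq p$ acts invertibly on $k_P$), so $G \otimes_\Z k_P \cong G_p \otimes_\Z k_P$; the latter has $k_P$-dimension equal to the number of cyclic summands of $G_p$, which is precisely $\log_p|G/G^p|$.

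I expect the construction and justification of the twisting automorphism $\tau$ to be the main conceptual step, since it is what dissolves the $c$-dependence of $\widetilde P$ and lets one exploit the canonical augmentation structure; once this is in place, the remainder is a routine computation with tensor products and the primary decomposition of finite abelian groups.
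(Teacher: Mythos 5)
Your proof is correct, but it follows a genuinely different route from the paper's. The paper exploits the cyclic decomposition $G=C_1\times\cdots\times C_m$ with $C_i\cong\Z/p_i^{\alpha_i}\Z$ to write down the explicit presentation $\rbar G=\Z[\xi][x_1,\ldots,x_m]/(x_1^{p_1^{\alpha_1}}-1,\ldots,x_m^{p_m^{\alpha_m}}-1)$; the Jacobian $J_{G,P}$ of this presentation is the diagonal matrix with entries $p_i^{\alpha_i}\bar x_i^{p_i^{\alpha_i}-1}$, and since each $\bar x_i$ is a root of unity in $k_P$ (hence nonzero), Lemma~\ref{lem:tangent}(i) reduces everything to counting the indices $i$ with $p_i=p$, which is $\log_p|G/G^p|$. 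You instead stay presentation-free: you use the identifications $\rbar G\cong\Z[\xi][\hat G]$ and $\rtilde G\cong k_P[\hat G]$, reduce via Theorem~\ref{th:edim} and Lemma~\ref{lem:primedesc}(iii) (equivalently, equation~\eqref{equeqs}) to computing $\dim_{k_P}\widetilde P/\widetilde P^2$, untwist the evaluation point $c$ to the identity by the algebra automorphism $\chi\mapsto\overline{\chi(c)}\,\chi$ (legitimate, since $\chi(c)$ is a root of unity and hence a unit in $k_P$), and then invoke the classical isomorphism $I/I^2\cong\hat G\otimes_\Z k_P$ for the augmentation ideal, finishing with the primary decomposition. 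The paper's argument is shorter given the Jacobian machinery it has already built and requires no external input; yours is more conceptual, makes transparent why the answer is independent of the point $c$ and of any chosen presentation, and would generalize to coefficient rings other than $\Z[\xi]$. The only step you leave as a citation rather than a proof is the isomorphism $I/I^2\cong\hat G\otimes_\Z k_P$ (you verify only that the map is well defined, not that it is bijective), but this is indeed a standard fact about group rings, so the gap is one of completeness of exposition rather than of substance.
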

\begin{proof}
  Note that the characters $\chi:G\rightarrow \C$ form a group
  (called the dual group) which is isomorphic
  to $G$. Furthermore, $G$ is the direct product
  $G=C_1\times\cdots\times C_m$
  of cyclic groups $C_i=\Z_{p_i^{\alpha_i}}$.
  Hence
  $$
  \rbar G=\Z[\xi][x_1,\ldots,x_m]/(x_1^{p_1^{\alpha_1}}-1,\ldots,
  x_m^{p_m^{\alpha_m}}-1).
  $$
  Hence the Jacobian matrix $J_{G,P}$ with respect to this presentation of $\rbar G$ over $\Z[\xi]$ is a diagonal square matrix with
  $$
  (p_1^{\alpha_1}\bar x_1^{p_1^{\alpha_1}-1},\ldots,p_m^{\alpha_m}\bar x_m^{p_m^{\alpha_m}-1})
  $$
  in the diagonal. The dimension of the kernel of this matrix (which is independent of the chosen generators and relations) is the number
  of zeros in the diagonal which is equal to the number of indices $i\in\{1,\ldots,m\}$ such
  that $p_i=p$.
  \end{proof}

The results of this section give us a practical method to calculate the dimensions of the Zariski tangent spaces $T_P(C/\Z)$ and $T_P(C/\Z[\xi])$ and the embedding dimension for $P\in C$ where 
$C=\spec{\rbar G}$ for a finite group $G$. The method can be summarized in the following simple algorithm.

\begin{algorithmic}
\Require $Q=(p,f(\xi))\in\spec{\Z[\xi]}$ and $P=\primeins Qc\in\spec{\rbar G}$
\State $\F\gets \F_p[x]/(f(x))$
\State $J\gets J_{G,P}$
\State $\dim_{k_P} T_P(C/\Z[\xi])\gets \dim\ker J$
\If{$p\nmid|G|\mbox{ or }(p=2\mbox{ and }4\nmid |G|$)}
\State $\dim_{k_P} T_P(C/\Z)\gets \dim\ker J$
\State $\edim_P\,C\gets\dim\ker J+1$
\Else \State $\dim_{k_P} T_P(C/\Z)\gets \dim_{k_P}\ker  J+1$
\State $\edim_P\,C\gets\dim\ker J+1$
\EndIf
\State \Return $\dim_{k_P} T_P(C/\Z[\xi)]$, $\dim_{k_P} T_P(C/\Z)$, $\edim_P\, C$
\end{algorithmic}

The algorithm above was implemented in the computational algebra system Magma~\cite{Magma}. The individual steps of this algorithm can be executed using built-in Magma functions. For instance the calculation of the matrix $J$ requires calculating the relations of the ring $\rbar G$ which can be done by computing the coefficients
$\alpha_{i,j}^k$ appearing in~\eqref{eq:chi}. 

\subsection{An example: $\alt 4$} To illustrate the algorithm above, consider the simple example when $G=\alt 4$. The character table of $G$ is
  $$
  \begin{array}{ccccc}
   \mbox{class} & c_1 & c_2 & c_3 & c_4\\
    \mbox{order} & 1 & 2 & 3 & 3\\
    \chi_1 & 1 & 1 & 1 & 1\\
    \chi_2 & 1 & 1& z & -1-z \\
    \chi_3 &1& 1 & -1-z & z \\
    \chi_4 &3& -1 & 0 & 0 \\
    \end{array}
  $$
  where $z$ is a primitive third root of unity. The corresponding cyclotomic polynomial
  is $\Phi_{12}(x)=x^4-x^2+1$. Also
  $$
  \Phi_{12}(x)=\left\{\begin{array}{ll}
  (x^2+x+1)^2& \mbox{in $\F_2[x]$};\\
  (x^2+1)^2& \mbox{in $\F_3[x].$}
  \end{array}\right.
  $$
  Hence $\Z[\xi]$ has two primes that project onto a prime divisor of $|G|$,
  namely,
  $$
  Q_2=(2,x^2+x+1)\qquad\mbox{and}\qquad Q_3=(3,x^2+1).
  $$
  The class $c_2$ is the unique conjugacy class which is not 2-regular while,
  $c_3$ and $c_4$ are the conjugacy classes that are not 3-regular.
  Furthermore, $(c_3)^{(3)}_r=(c_4)^{(3)}_r=c_1$.
  Hence there are two singular primes $P_2$ and $P_3$ of $\rbar G$ and they
  are given by
  \begin{eqnarray*}
    P_2&=&(2,\xi^2+\xi+1,\chi_2-1,\chi_3-1,\chi_4-1)\\
    P_3&=&(3,\xi^2+1,\chi_2-1,\chi_3-1,\chi_4).\\
    \end{eqnarray*}
  Furthermore, considering $\rbar G$ as the $\Z[\xi]$-algebra
  $\Z[\xi][x_2,x_3,x_4]/(f_1,\ldots,f_6)$, the polynomials $f_i$ are given
  by the equations
  $$
  \chi_2^2=\chi_3,
   \chi_2\chi_3=\chi_1,
    \chi_2\chi_4=\chi_4,
    \chi_3^2=\chi_2,
    \chi_3\chi_4=\chi_4,
    \chi_4^2=\chi_1+\chi_2+\chi_3+2\chi_4.
  $$
  In particular $\rbar G$ is isomorphic to $\Z[\xi][x_2,x_3,x_4]/I_G$
  where $I_G$ is the ideal generated by the polynomials
  $$
  x_2^2-x_3,\ x_2x_3-1,\ x_2x_4-x_4,\ x_3^2-x_2,\ x_3x_4-x_4,\ x_4^2-1-x_2-x_3-2x_4.
  $$
  Hence the Jacobian matrix $J=(\partial f_i)/(\partial x_j)$ with respect to these generators and relations is
  $$
  J=\begin{pmatrix}
  2x_2 & -1 & 0\\
  x_3 & x_2 & 0\\
  x_4 & 0 & x_2-1\\
  -1 & 2x_3 & 0\\
  0 & x_4 & x_3-1\\
  -1 & -1 & 2x_4-2
  \end{pmatrix}.
  $$
  Therefore we obtain the Jacobian matrices  $P_2$ and $P_3$
  over $\F_2$ and $\F_3$, respectively, by making
  the substitutions $x_2=x_3=x_4=1$ and $x_2=x_3=1$ and $x_4=0$:
  $$
  J_{G,P_2}=\begin{pmatrix}
  2 & 1 & 0\\
  1 & 1 & 0\\
  1 & 0 & 0\\
  1 & 0 & 0 \\
  0 & 1 & 0\\
  1 & 1 & 0
  \end{pmatrix}
  \quad\mbox{and}\quad
  J_{G,P_3}=\begin{pmatrix}
  2 & 2 & 0\\
  1 & 1 & 0\\
  0 & 0 & 0\\
  2 & 2 & 0\\
  0 & 0 & 0\\
  2 & 2 & 1
  \end{pmatrix}.
  $$
  Simple calculation shows that
  $$
  \dim_{\F_2}\ker J_{G,P_2}=\dim_{\F_3}\ker J_{G,P_3}=1.
  $$
  Furthermore, letting $C=\spec{\rbar G}$, we find that
  $$
  \dim_{\F_2} T_{P_2}(C/\Z[\xi])=  \dim_{\F_3} T_{P_3}(C/\Z[\xi])=1,
  $$
  $$
  \dim_{\F_2} T_{P_2}(C/\Z)=  \dim_{\F_3} T_{P_3}(C/\Z)=2,
  $$
    $$
  \edim_{P_2}C=\edim_{P_3}C=2.
  $$

We just note that this also stands for an example where the embedding dimension can be strictly smaller than the minimal number of generators. Indeed, one can check that we may shrink the number of generators here to
$$
P_2=(1+\xi+\xi^2, \chi_4-1)\quad\mbox{and}\quad P_3=(1+\xi^2, \chi_2-1,\chi_4),
$$
but not further. Thus $\edim_{P_3}(C)=2 < 3={\rm mg}(P_3)$.

  In the tables in the Appendix below,  we summarise the computations for the groups $S_3$, $D_4$, $A_4$, $D_8$, $S_4$, $A_5$ and $A_6$. The rows of the tables contain the singular primes of $\rbar G$ for each $G$. In these groups, for each prime
  $Q=(p,f(\xi))\in\spec{\Z[\xi]}$ there is a unique prime $P\in\spec{\rbar G}$ that
  projects onto $Q$.
  The Magma implementations of the procedures are available from github.com/schcs/GreenRings.

  \appendix
    \section*{Appendix: The tables}
  
    \begin{table}[!ht]\centering\begin{tabular}{|c|c|c|c|c|}\hline $p$ & $f(x)$ & 
$\edim_P\, C$ & $\dim_{k_P}T_P(C/\Z)$ & $\dim_{k_P}T_P(C/\Z[\xi])$ \\  
\hline\hline
2
&
$
x^2 + x + 1
$
&
2
&
1
&
1
\\\hline
3
&
$
x + 1
$
&
2
&
2
&
1
\\\hline
\end{tabular}\caption{The singularities of 
$\rbar{S_3}$}\label{tab:S_3}\end{table}
\begin{table}[ht]\centering\begin{tabular}{|c|c|c|c|c|}\hline $p$ & $f(x)$ & 
$\edim_P\, C$ & $\dim_{k_P}T_P(C/\Z)$ & $\dim_{k_P}T_P(C/\Z[\xi])$ \\  
\hline\hline
2
&
$
x + 1
$
&
4
&
4
&
3
\\\hline
\end{tabular}\caption{The singularities of 
$\rbar{D_4}$}\label{tab:D_4}\end{table}

\begin{table}[!ht]\centering\begin{tabular}{|c|c|c|c|c|}\hline $p$ & $f(x)$ & 
$\edim_P\, C$ & $\dim_{k_P}T_P(C/\Z)$ & $\dim_{k_P}T_P(C/\Z[\xi])$ \\  
\hline\hline
2
&
$
x^2 + x + 1
$
&
2
&
2
&
1
\\\hline
3
&
$
x^2 + 1
$
&
2
&
2
&
1
\\\hline
\end{tabular}\caption{The singularities of 
$\rbar{A_4}$}\label{tab:A_4}\end{table}

\begin{table}[!ht]\centering\begin{tabular}{|c|c|c|c|c|}\hline $p$ & $f(x)$ & 
$\edim_P\, C$ & $\dim_{k_P}T_P(C/\Z)$ & $\dim_{k_P}T_P(C/\Z[\xi])$ \\  
\hline\hline
2
&
$
x + 1
$
&
4
&
4
&
3
\\\hline
\end{tabular}\caption{The singularities of 
$\rbar{D_8}$}\label{tab:D_8}\end{table}
\begin{table}[!ht]\centering\begin{tabular}{|c|c|c|c|c|}\hline $p$ & $f(x)$ & 
$\edim_P\, C$ & $\dim_{k_P}T_P(C/\Z)$ & $\dim_{k_P}T_P(C/\Z[\xi])$ \\  
\hline\hline
2
&
$
x^2 + x + 1
$
&
3
&
3
&
2
\\\hline
3
&
$
x^2 + x + 2
$
&
2
&
2
&
1
\\\hline
3
&
$
x^2 + 2x + 2
$
&
2
&
2
&
1
\\\hline
\end{tabular}\caption{The singularities of 
$\rbar{S_4}$}\label{tab:S_4}\end{table}
\begin{table}[!ht]\centering\begin{tabular}{|c|c|c|c|c|}\hline $p$ & $f(x)$ & 
$\edim_P\, C$ & $\dim_{k_P}T_P(C/\Z)$ & $\dim_{k_P}T_P(C/\Z[\xi])$ \\  
\hline\hline
2
&
$
x^4 + x + 1
$
&
2
&
2
&
1
\\\hline
2
&
$
x^4 + x^3 + 1
$
&
2
&
2
&
1
\\\hline
3
&
$
x^4 + x^3 + 2x + 1
$
&
2
&
2
&
1
\\\hline
3
&
$
x^4 + 2x^3 + x + 1
$
&
2
&
2
&
1
\\\hline
5
&
$
x^2 + 2x + 4
$
&
2
&
2
&
1
\\\hline
5
&
$
x^2 + 3x + 4
$
&
2
&
2
&
1
\\\hline
\end{tabular}\caption{The singularities of 
$\rbar{A_5}$}\label{tab:A_5}\end{table}
\begin{table}[!ht]\centering\begin{tabular}{|c|c|c|c|c|}\hline $p$ & $f(x)$ & 
$\edim_P\, C$ & $\dim_{k_P}T_P(C/\Z)$ & $\dim_{k_P}T_P(C/\Z[\xi])$ \\  
\hline\hline
2
&
$
x^{12} + x^3 + 1
$
&
2
&
2
&
1
\\\hline
2
&
$
x^{12} + x^9 + 1
$
&
2
&
2
&
1
\\\hline
3
&
$
x^4 + x^2 + x + 1
$
&
2
&
2
&
1
\\\hline
3
&
$
x^4 + x^2 + 2x + 1
$
&
2
&
2
&
1
\\\hline
3
&
$
x^4 + x^3 + x^2 + 1
$
&
2
&
2
&
1
\\\hline
3
&
$
x^4 + 2x^3 + x^2 + 1
$
&
2
&
2
&
1
\\\hline
5
&
$
x^6 + x^3 + 2
$
&
2
&
2
&
1
\\\hline
5
&
$
x^6 + 2x^3 + 3
$
&
2
&
2
&
1
\\\hline
5
&
$
x^6 + 3x^3 + 3
$
&
2
&
2
&
1
\\\hline
5
&
$
x^6 + 4x^3 + 2
$
&
2
&
2
&
1
\\\hline
\end{tabular}\caption{The singularities of 
$\rbar{A_6}$}\label{tab:A_6}\end{table}



\begin{thebibliography}{BCP97}

\bibitem[AK70]{AK}
Allen Altman and Steven Kleiman.
\newblock {\em Introduction to {G}rothendieck duality theory}.
\newblock Lecture Notes in Mathematics, Vol. 146. Springer-Verlag, Berlin-New
  York, 1970.

\bibitem[Ati61]{A}
M.~F. Atiyah.
\newblock Characters and cohomology of finite groups.
\newblock {\em Inst. Hautes \'{E}tudes Sci. Publ. Math.}, (9):23--64, 1961.

\bibitem[AM69]{AM}
M.~F. Atiyah, and I. G. Mcdonald.
\newblock Introduction to commutative algebra.
\newblock {\em Addison-Wesley, Reading, Mass.}, 1969.

\bibitem[BCP97]{Magma}
Wieb Bosma, John Cannon, and Catherine Playoust.
\newblock The {M}agma algebra system. {I}. {T}he user language.
\newblock {\em J. Symbolic Comput.}, 24(3-4):235--265, 1997.
\newblock Computational algebra and number theory (London, 1993).

\bibitem[Bue06]{B}
Andr\'e Gimenez Bueno.
\newblock {\em Os Esquemas associados aos an\'eis de grupos abelianos}.
\newblock PhD thesis, Universidade de S\~ao Paulo, 2006.

\bibitem[BD06a]{BD1}
Andr\'{e} Gimenez~Bueno and Michael Dokuchaev.
\newblock On spectra of group rings of finite abelian groups.
\newblock In {\em Groups, rings and group rings}, volume 248 of {\em Lect.
  Notes Pure Appl. Math.}, pages 183--190. Chapman \& Hall/CRC, Boca Raton, FL,
  2006.
\bibitem[BD08]{BD}
Andre Gimenez Bueno and Michael Dokuchaev.
\newblock On spectra of abelian group rings.
\newblock {\em Publ. Math. Debrecen}, 72(3-4):269--284, 2008.

\bibitem[CS86]{CS}
Gary Cornell and Joseph~H. Silverman, editors.
\newblock {\em Arithmetic geometry}.
\newblock Springer-Verlag, New York, 1986.
\newblock Papers from the conference held at the University of Connecticut,
  Storrs, Connecticut, July 30--August 10, 1984.

\bibitem[Dem86]{D}
Michel Demazure.
\newblock {\em Lectures on {$p$}-divisible groups}, volume 302 of {\em Lecture
  Notes in Mathematics}.
\newblock Springer-Verlag, Berlin, 1986.
\newblock Reprint of the 1972 original.

\bibitem[Isa06]{isaacs}
I.~Martin Isaacs.
\newblock {\em Character theory of finite groups}.
\newblock AMS Chelsea Publishing, Providence, RI, 2006.
\newblock Corrected reprint of the 1976 original [Academic Press, New York;
  MR0460423].

\bibitem[Mat89]{M}
Hideyuki Matsumura.
\newblock {\em Commutative ring theory}, volume~8 of {\em Cambridge Studies in
  Advanced Mathematics}.
\newblock Cambridge University Press, Cambridge, second edition, 1989.
\newblock Translated from the Japanese by M. Reid.

\bibitem[Neu99]{Neukirch}
J\"{u}rgen Neukirch.
\newblock {\em Algebraic number theory}, volume 322 of {\em Grundlehren der
  Mathematischen Wissenschaften [Fundamental Principles of Mathematical
  Sciences]}.
\newblock Springer-Verlag, Berlin, 1999.
\newblock Translated from the 1992 German original and with a note by Norbert
  Schappacher, With a foreword by G. Harder.

\bibitem[Seg68]{Sg}
Graeme Segal.
\newblock The representation ring of a compact {L}ie group.
\newblock {\em Inst. Hautes \'{E}tudes Sci. Publ. Math.}, (34):113--128, 1968.

\bibitem[Ser77]{Sr}
Jean-Pierre Serre.
\newblock {\em Linear representations of finite groups}.
\newblock Springer-Verlag, New York-Heidelberg, 1977.
\newblock Translated from the second French edition by Leonard L. Scott,
  Graduate Texts in Mathematics, Vol. 42.

\bibitem[Wat79]{W}
William~C. Waterhouse.
\newblock {\em Introduction to affine group schemes}, volume~66 of {\em
  Graduate Texts in Mathematics}.
\newblock Springer-Verlag, New York-Berlin, 1979.

\end{thebibliography}
\end{document}